\newtheorem{theorem}{Theorem}[section]
\newtheorem{corollary}[theorem]{Corollary}
\newtheorem{lemma}[theorem]{Lemma}
\newtheorem{conjecture}[theorem]{Conjecture}
\newtheorem{proposition}[theorem]{Proposition}
\theoremstyle{definition}
\newtheorem{example}[theorem]{Example}
\newtheorem{remark}[theorem]{Remark}
\definecolor{lsupurple}{RGB}{70,29,124}
\definecolor{lsugold}{RGB}{253,208, 35}
\def\Span{\operatorname{span}}
\def\Span{\operatorname{span}}
\title{A characterization of adequate Turaev genus one links}
\author{Khaled Qazaqzeh}
\address{Department of Mathematics, Faculty of Science, Yarmouk University, Irbid, Jordan, 21163}
\email{qazaqzeh@yu.edu.jo}
\author{Nafaa Chbili}
\address{Department of Mathematical Sciences, College of Science, UAE University, 15551 Al Ain, U.A.E.}
\email{nafaachbili@uaeu.ac.ae}
\author{Adam M. Lowrance}
\address{Department of Mathematics and Statistics, Vassar College, Poughkeepsie, NY 12604}
\email{adlowrance@vassar.edu}
\date{}
   \def\MR#1{}
\begin{document}

\begin{abstract}
   We prove that a link is adequate and has Turaev genus one if and only if the span of its Jones polynomial is one less than its crossing number. 
\end{abstract}

\maketitle

\section{Introduction}
\label{sec:intro}

Following Jones' celebrated discovery of the Jones polynomial \cite{Jones_1987}, Kauffman \cite{Kauffman_1987}, Murasugi \cite{Murasugi_1987}, and Thistlethwaite \cite{Thistlethwaite_1987} independently proved that a reduced alternating diagram of a link $L$ has the fewest number of crossings among all diagrams of $L$. Let $V_L(t)$ be the Jones polynomial of $L$, and let $c(L)$ be the crossing number of $L$. A key element in each of their proofs is the inequality $\Span V_L(t) \leq c(L)$, where equality holds if and only if $L$ is alternating. 

Each crossing $\tikz[baseline=.6ex, scale = .4]{
\draw (0,0) -- (1,1);
\draw (.3,.7) -- (0,1);
\draw (.7,.3) -- (1,0);
}
$ of a link diagram $D$ has an $A$-resolution$~ \tikz[baseline=.6ex, scale = .4]{
\draw[rounded corners = 1mm] (0,0) -- (.45,.5) -- (0,1);
\draw[rounded corners = 1mm] (1,0) -- (.55,.5) -- (1,1);
}$ and a $B$-resolution $~ \tikz[baseline=.6ex, scale = .4]{
\draw[rounded corners = 1mm] (0,0) -- (.5,.45) -- (1,0);
\draw[rounded corners = 1mm] (0,1) -- (.5,.55) -- (1,1);
}$. A \textit{Kauffman state} of $D$ is the collection of simple closed curves in the plane resulting from a choice of an $A$-resolution or $B$-resolution at each crossing. The all-$A$ state $\sigma_A(D)$ is the state obtained by choosing an $A$-resolution for every crossing of $D$. Similarly, the all-$B$ state $\sigma_B(D)$ is the state where every resolution is a $B$-resolution. A link diagram $D$ is \textit{$A$-adequate} if no two arcs in the $A$-resolution of any crossing are contained in the same component of the all-$A$ state $\sigma_A(D)$, and similarly, $D$ is \textit{$B$-adequate} if no two arcs in the $B$-resolution of any crossing are contained in the same component of the all-$B$ state $\sigma_B(D)$. Lickorish and Thistlethwaite \cite{LT_1988} define a link to be \textit{adequate} if it has a diagram $D$ that is both $A$-adequate and $B$-adequate.

The Turaev surface of a link diagram $D$ is a closed, oriented surface in $S^3$ of genus 
\begin{equation}
\label{eq:tg}
g_T(D) = \frac{1}{2}(c(D)+2-|\sigma_A(D)|-|\sigma_B(D)|),
\end{equation}
where $c(D)$ is the number of crossings in $D$, and $|\sigma_A(D)|$ and $|\sigma_B(D)|$ are the number of components in the all-$A$ and all-$B$ states of $D$. See Section \ref{sec:background} for a detailed construction of the Turaev surface. The \textit{Turaev genus} $g_T(L)$ of a non-split link $L$ is defined as
\[g_T(L)=\min\{g_T(D)~|~D~\text{is a diagram of}~L\}.\]
Turaev \cite{Turaev_1987} used the surface to give an alternate proof of Kauffman, Murasugi, and Thistlethwaite's theorem on the crossing number of alternating knots. Bae and Morton \cite{BaeMorton_2003} and Dasbach et al. \cite{DFKLS_2008} proved that
\begin{equation}
\label{eq:span}
\Span V_L(t) \leq c(L) - g_T(L),
\end{equation}
where equality holds if $L$ is adequate according to the results in \cite{Abe_2009} and \cite{T_1988_2}.

Our main theorem uses the Jones polynomial to give a characterization of non-split adequate links with Turaev genus one.
\begin{theorem}
    \label{thm:main}
 A non-split link $L$ in $S^3$ is adequate and has Turaev genus one if and only if    
\[\Span V_L(t) = c(L) - 1.\]
\end{theorem}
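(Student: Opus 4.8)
The plan is to prove the two implications separately; the forward direction is immediate, while the reverse direction carries essentially all of the content.

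For the forward direction, suppose $L$ is adequate with $g_T(L)=1$. Then the equality case of \eqref{eq:span}, justified by \cite{Abe_2009} and \cite{T_1988_2}, gives $\Span V_L(t) = c(L)-g_T(L) = c(L)-1$ at once. For the reverse direction, assume $\Span V_L(t)=c(L)-1$ and fix a connected minimal crossing diagram $D$, so that $c(D)=c(L)$. I would first translate the hypothesis into the Kauffman bracket: since the writhe normalization is a monomial, $\Span V_L(t)$ equals one quarter of the span in $A$ of $\langle D\rangle$. Combining the standard state-sum bounds $\max\deg_A\langle D\rangle \le c(D)+2|\sigma_A(D)|-2$ and $\min\deg_A\langle D\rangle \ge -c(D)-2|\sigma_B(D)|+2$ with \eqref{eq:tg} yields the diagram-level inequality $\Span V_L(t)\le c(D)-g_T(D)$, which forces $g_T(D)\le 1$. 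Because $\Span V_L(t)=c(L)-1<c(L)$, the Kauffman--Murasugi--Thistlethwaite theorem shows $L$ is not alternating, so $D$ is not an alternating diagram and hence $g_T(D)\ge 1$. Therefore $g_T(D)=1$, which also gives $g_T(L)=1$, and moreover $|\sigma_A(D)|+|\sigma_B(D)|=c(D)$.

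Next I would measure how far the extreme degrees of $\langle D\rangle$ fall short of the state-sum bounds. Setting $\delta_A = \bigl(c(D)+2|\sigma_A(D)|-2\bigr) - \max\deg_A\langle D\rangle \ge 0$ and $\delta_B = \min\deg_A\langle D\rangle - \bigl(-c(D)-2|\sigma_B(D)|+2\bigr)\ge 0$, a direct computation using \eqref{eq:tg} gives
\[
\Span V_L(t) = \bigl(c(D)-g_T(D)\bigr) - \tfrac14(\delta_A+\delta_B).
\]
Since the left side equals $c(L)-1=c(D)-g_T(D)$, I conclude $\delta_A+\delta_B=0$, hence $\delta_A=\delta_B=0$: both extreme degrees of $\langle D\rangle$ are attained. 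It is worth emphasizing that this identity holds with no genus hypothesis, so the vanishing of the defects is not by itself enough to conclude adequacy; indeed $\delta_A=0$ does not force $A$-adequacy for a general diagram, which is why the condition $g_T(D)=1$ must still be used.

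The remaining and hardest step is to upgrade ``extreme degree attained'' to adequacy under $g_T(D)=1$. I would isolate this as a lemma: if $g_T(D)=1$, then $\delta_A=0$ implies $D$ is $A$-adequate, and symmetrically for $B$. The approach is to write the coefficient of the top term $A^{c(D)+2|\sigma_A(D)|-2}$ as a signed sum over the Kauffman states obtained from $\sigma_A(D)$ by switching resolutions on a subset $S$ of crossings for which the number of state circles increases by exactly $|S|$, and to interpret these subsets through the $A$-state graph $\mathbb{G}_A$, whose vertices are the circles of $\sigma_A(D)$ and whose edges are the crossings; here $D$ fails to be $A$-adequate precisely when $\mathbb{G}_A$ has a loop. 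The genus-one hypothesis forces $\mathbb{G}_A$ to be a genus-one ribbon graph, whose restricted cycle structure should make the presence of a loop produce a cancellation that kills the top coefficient, giving $\delta_A>0$ and a contradiction. Applying the lemma and its $B$-analogue shows $D$ is both $A$-adequate and $B$-adequate, so $L$ is adequate. I expect the main obstacle to be exactly this coefficient analysis: controlling the cancellation in the extreme coefficient by exploiting the severe restriction that $g_T(D)=1$ places on $\mathbb{G}_A$ is the technical heart of the argument, and the genus-one hypothesis is what makes it go through.
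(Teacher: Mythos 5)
Your reduction is sound and matches the paper's second proof up to its midpoint: the forward direction via the equality case of \eqref{eq:span}, the derivation $g_T(D)=1$ for a minimal diagram, and the defect identity forcing $\delta_A=\delta_B=0$ (i.e., $a_M,a_m\neq 0$) are all correct. (One small slip: from ``$D$ is not an alternating diagram'' you cannot directly conclude $g_T(D)\geq 1$, since $g_T(D)=0$ iff $D$ is a connected sum of alternating diagrams, which need not be alternating as a diagram; the correct inference is that $g_T(D)=0$ would make $L$ an alternating \emph{link}, contradicting $\Span V_L(t)<c(L)$.)

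The genuine gap is your key lemma, which is false as stated: it is not true that $g_T(D)=1$ together with $\delta_A=0$ forces $A$-adequacy, and the hoped-for cancellation from a loop in the $A$-state graph does not occur in general. The paper's own Example \ref{ex:BM} is a counterexample: the diagram $D$ of Figure \ref{fig:BM} has $c(D)=15$, $|\sigma_A(D)|=9$, $|\sigma_B(D)|=6$, hence $g_T(D)=1$, it has $A$-chords (so it is not $A$-adequate), and yet $a_M=-1\neq 0$. The correct vanishing criterion (Lemma \ref{lem:extremecoeff}) applies only when, on each state circle, all chords lie on one side; a pair of \emph{interleaved} chords defeats the cancellation, since the pair is not an independent set and the signed sum need not vanish. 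Consequently, nonvanishing of $a_M$ only yields the existence of interleaved $A$-chords, and ruling these out cannot be done by coefficient combinatorics alone: the paper must invoke the structural classification of Turaev genus one diagrams (Theorem \ref{thm:tg1}) to show interleaved chords force a two-tangle decomposition, and then perform two flypes and an isotopy (Figure \ref{fig:2tang}) to exhibit an alternating diagram of $L$, contradicting the link-level hypothesis that $L$ is non-alternating. Your plan uses neither the classification nor the non-alternating hypothesis at this stage, so the ``technical heart'' you identify is aimed at proving a statement that is simply untrue; any repair must replace the local ribbon-graph cancellation argument with a global diagrammatic argument of this kind (or switch to the paper's first proof via Thistlethwaite's Kauffman-polynomial criterion and the arc-index bound of Proposition \ref{prop:aspan}).
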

The forward implication of Theorem \ref{thm:main} follows from the above discussion. We give two distinct proofs of the backward implication in Sections \ref{sec:proof1} and \ref{sec:proof2}, the first using the two variable Kauffman polynomial characterization of adequate links and the second using Bae and Morton's method for computing extreme coefficients of the Jones polynomial \cite{BaeMorton_2003} and a diagrammatic characterization of Turaev genus one links. We hope that one of our proofs may inspire an approach to the following conjecture, which proposes an extension of Theorem \ref{thm:main} to
links of arbitrary Turaev genus.
\begin{conjecture}
\label{conj:main}
A non-split link is adequate if and only if $\Span V_L(t) = c(L) - g_T(L)$.
\end{conjecture}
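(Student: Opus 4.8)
To attempt Conjecture~\ref{conj:main} the plan is to establish the backward implication for arbitrary Turaev genus, namely that the equality $\Span V_L(t)=c(L)-g_T(L)$ forces $L$ to be adequate; the forward implication already follows from the discussion preceding Theorem~\ref{thm:main}. The first step is to convert this invariant equality into information about a single diagram. Since $L$ is non-split, choose a connected diagram $D$ with $c(D)=c(L)$, and let $\langle D\rangle$ be its Kauffman bracket. Kauffman's degree estimates give
\[
\max\deg_A\langle D\rangle\le c(D)+2|\sigma_A(D)|-2,\qquad \min\deg_A\langle D\rangle\ge -c(D)-2|\sigma_B(D)|+2,
\]
and after the change of variable relating $\langle D\rangle$ to $V_L(t)$ and substituting the Turaev genus formula \eqref{eq:tg}, these combine into the diagrammatic bound $\Span V_L(t)\le c(D)-g_T(D)$. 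Using $c(D)=c(L)$ and $g_T(D)\ge g_T(L)$ produces the chain $\Span V_L(t)\le c(L)-g_T(D)\le c(L)-g_T(L)$, in which the hypothesis forces both inequalities to be equalities. Thus the equality yields two consequences: (i) $g_T(D)=g_T(L)$, so every minimal crossing diagram of $L$ is Turaev-minimal, and (ii) both of Kauffman's degree estimates are attained by $\langle D\rangle$.

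The remaining and essential task is to upgrade (i) and (ii) to adequacy of $D$. I would study the extreme coefficients of $\langle D\rangle$ through the all-$A$ and all-$B$ state graphs: let $\mathbb{G}_A$ be the graph whose vertices are the components of $\sigma_A(D)$ and whose edges are the crossings of $D$, so that $D$ is $A$-adequate exactly when $\mathbb{G}_A$ has no loop edge, with the symmetric statement for $\mathbb{G}_B$. Expanding the bracket shows that the coefficient of the top potential degree $A^{c(D)+2|\sigma_A(D)|-2}$ is a signed sum of contributions indexed by subsets of loop edges of $\mathbb{G}_A$ whose $B$-resolution raises the component count maximally; this is precisely the datum that Bae and Morton's technique extracts \cite{BaeMorton_2003}. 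When $D$ is $A$-adequate this coefficient equals $\pm1$, so (ii) is automatic; the content we need is the converse, that under Turaev-minimality (i) the non-vanishing of this coefficient forces $\mathbb{G}_A$, and likewise $\mathbb{G}_B$, to be loopless. A parallel route, following the first proof of Theorem~\ref{thm:main}, would instead transfer the span equality into extremal $z$-degree behavior of the two-variable Kauffman polynomial and invoke the characterization of adequate links by that polynomial.

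The main obstacle is exactly the implication (ii) $\Rightarrow$ adequacy. Attaining the extreme bracket degrees does not by itself force adequacy: a diagram whose state graph $\mathbb{G}_A$ carries loop edges can still have a non-vanishing top coefficient whenever the signed contributions of its loop-edge subsets fail to cancel completely. The heart of Conjecture~\ref{conj:main} is therefore to show that such incomplete cancellation is incompatible with $D$ being Turaev-minimal, equivalently that a Turaev-minimal minimal crossing diagram attaining both of Kauffman's degree estimates must have loopless state graphs. In the Turaev genus one case this is delivered by the explicit diagrammatic description of genus one diagrams used in Section~\ref{sec:proof2}, which restricts the loop configurations enough to force the cancellation in the non-adequate case. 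No comparable structural classification of Turaev-minimal diagrams is known in higher genus, and supplying the structural input that rules out accidental non-cancellation---or devising an argument that bypasses it---is the crux of the problem and the reason the general statement is only conjectural.
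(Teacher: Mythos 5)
The statement you were asked to prove is Conjecture~\ref{conj:main}, which the paper itself leaves open: the authors explicitly note that an earlier attempt \cite{QC_2024} contains a gap and defer the conjecture to future work, so there is no proof in the paper to compare against. Your proposal, to its credit, does not pretend otherwise. The portions you actually establish are correct and coincide with what the paper does prove: the forward implication follows from Thistlethwaite's and Abe's minimality results together with Lickorish--Thistlethwaite's computation of the extreme coefficients, and your reduction showing that $\Span V_L(t)=c(L)-g_T(L)$ forces any minimum crossing diagram $D$ to satisfy $g_T(D)=g_T(L)$ with both extreme bracket coefficients $a_m$ and $a_M$ nonzero is exactly Remark~\ref{rem:extreme} and the opening paragraphs of both proofs of Theorem~\ref{thm:main}. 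Your identification of the crux is also accurate: nonvanishing of the extreme coefficients does not by itself imply adequacy (incomplete cancellation among loop-edge, i.e.\ chord, contributions in the Bae--Morton sum is possible for non-adequate diagrams), and the missing structural input is a higher-genus analogue of the Turaev genus one classification (Theorem~\ref{thm:tg1}) that powers Section~\ref{sec:proof2}. This gap is genuine and is precisely why the statement remains a conjecture; you correctly refrain from claiming to close it.

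Two refinements to your sketch. First, in the route through the two-variable Kauffman polynomial you point at extremal $z$-degree behavior, but the quantity that must be controlled, following the first proof of Theorem~\ref{thm:main}, is the $a$-span: the paper isolates this as Conjecture~\ref{conj:aspan}, namely $\Span_a \Lambda_D(a,z)\le c(L)-2g_T(L)$, whose genus one case (Proposition~\ref{prop:aspan}, via the arc index bounds of Morton--Beltrami and Jin--Park) is exactly what makes the first proof close, with Thistlethwaite's Theorem~\ref{thm:KauffmanSupport} then converting equality in the degree bounds into adequacy. Proving Conjecture~\ref{conj:aspan} would be a concrete way to implement your plan in arbitrary genus. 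Second, your summary of the genus one mechanism slightly misstates it: when $\widetilde{\sigma_A}(D)$ has chords but no interleaved pair, Lemma~\ref{lem:extremecoeff} forces $a_M=0$ by cancellation, contradicting nonvanishing; when an interleaved pair does exist, the structural theorem forces a two-tangle form and two flypes produce an alternating diagram, contradicting non-alternation. So in the non-adequate interleaved case the contradiction is alternation, not cancellation, and a higher-genus argument would likewise need a geometric (not merely combinatorial) input of this kind.
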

The first and second authors \cite{QC_2024} attempted to resolve Conjecture \ref{conj:main}, but there is a gap in their proof.  An affirmative answer would give a characterization of an adequate link in terms of its Turaev genus and Jones polynomial. Other characterizations of adequate links include the following. Thistlethwaite \cite{T_1988_2} characterized adequate links in terms of the 2-variable Kauffman polynomial. Kalfagianni \cite{Kalfagianni_2018} and Kalfagianni and Lee \cite{KL_2023} gave related characterizations of adequate links in terms of the colored Jones polynomial. We hope to address Conjecture \ref{conj:main} in a future work.

This paper is organized as follows. Section \ref{sec:background} gives background information on the Jones polynomial, adequate links, and the Turaev surface. In Section \ref{sec:proof1}, we prove Theorem \ref{thm:main} using the two-variable Kauffman polynomial. In Section \ref{sec:proof2}, we prove Theorem \ref{thm:main} using a diagrammatic characterization of Turaev genus one links. Finally, in Section \ref{sec:cor}, we state and prove some corollaries to Theorem \ref{thm:main}.

\section{Background}
\label{sec:background}

In this section, we give background information on the Jones polynomial, adequate knots and links, and on the Turaev surface.
\subsection{The Jones polynomial}

The Kauffman bracket $\langle D \rangle$ of the link diagram $D$ is a Laurent polynomial in $\mathbb{Z}[A,A^{-1}]$ defined by the following rules:
\begin{enumerate}
\item $\left\langle~
\tikz[baseline=.6ex, scale = .4]{
\draw (0,0) -- (1,1);
\draw (1,0) -- (.7,.3);
\draw (.3,.7) -- (0,1);
}
~\right\rangle = A \left\langle ~ \tikz[baseline=.6ex, scale = .4]{
\draw[rounded corners = 1.5mm] (0,0) -- (.45,.5) -- (0,1);
\draw[rounded corners = 1.5mm] (1,0) -- (.55,.5) -- (1,1);
}~\right\rangle + A^{-1} \left\langle~ \tikz[baseline=.6ex, scale = .4]{
\draw[rounded corners = 1.5mm] (0,0) -- (.5,.45) -- (1,0);
\draw[rounded corners = 1.5mm] (0,1) -- (.5,.55) -- (1,1);
}~\right\rangle,$
\item $\left\langle~D\sqcup \bigcirc ~\right\rangle = (-A^2-A^{-2})\left\langle D \right\rangle,$
\item $\left\langle ~ \bigcirc ~\right\rangle = 1.$
\end{enumerate}

Let $M=M(D)=c(D) +2 |\sigma_A(D)|-2$ and $m=m(D)=-c(D)-2|\sigma_B(D)|+2$. Kauffman \cite{Kauffman_1987} proved that $\max\deg\langle D \rangle \leq M$, $\min\deg\langle D \rangle \geq m$, and that powers of $A$ in $\langle D \rangle$ with nonzero coefficients are congruent modulo $4$. Thus the  Kauffman bracket $\langle D \rangle$ can be expressed as
\begin{equation}
    \label{eq:Kauffman}
    \langle D \rangle = a_m A^m + a_{m+4} A^{m+4} + \cdots + a_{M-4}A^{M-4} + a_M A^M.
\end{equation}
The \textit{writhe} $w(D)$ of an oriented link diagram is the number of positive crossings $\left(~\tikz[baseline=.6ex, scale = .4]{
\draw[->] (0,0) -- (1,1);
\draw (1,0) -- (.7,.3);
\draw[->] (.3,.7) -- (0,1);
}~\right)$  in $D$ minus the number of negative crossings $\left(~\tikz[baseline=.6ex, scale = .4]{
\draw[->] (.7,.7) -- (1,1);
\draw[->] (1,0) -- (0,1);
\draw (0,0) -- (.3,.3);
}~\right)$ in $D$. The Jones polynomial $V_L(t)$ of the link $L$ with diagram $D$ is defined as
\[V_L(t) = (-A^3)^{-w(D)}\langle D \rangle |_{A=t^{-1/4}}.\]

The \textit{span} of $V_L(t)$ is $\Span V_L(t) = \max\deg_t V_L(t) - \min\deg_t V_L(t)$. The span of $V_L(t)$ satisfies $4\Span V_L(t) = \Span \langle D \rangle$ where $\Span \langle D \rangle=\max\deg_A \langle D \rangle - \min\deg_A \langle D \rangle$. 

\begin{remark}
    \label{rem:extreme}
    Equation \eqref{eq:Kauffman} implies that
\begin{align*}
    \Span \langle D \rangle \leq & \; M(D) - m(D)\\
    = & \; 2c(D) +2|\sigma_A(D)| + 2|\sigma_B(D)| -4\\
    = & \; 4c(D) - 4g_T(D),
\end{align*}
where equality holds if and only if both coefficients $a_m$ and $a_M$ are non-zero. Minimizing the above inequality over all diagrams $D$ of $L$ implies Inequality \eqref{eq:span}.
\end{remark}

\subsection{Adequate knots and links} Since every reduced alternating diagram is adequate, the set of alternating links is a subset of the set of adequate links. Investigations into adequate links often compare and contrast their properties to the analogous properties of alternating links. Lickorish and Thistlethwaite \cite{LT_1988} defined adequate knots and links and proved every nontrivial adequate link has nontrivial Jones polynomial. Thistlethwaite \cite{T_1988_2} used the Kauffman polynomial to prove that adequate diagrams minimize crossing number, and moreover that every minimum crossing diagram of an adequate link is adequate. Abe \cite{Abe_2009} used the Khovanov homology of a link to show that every adequate diagram minimizes the Turaev genus of a link.

Adequate links and the related class of semi-adequate links have been studied extensively in the literature. The Jones and colored Jones polynomial of adequate and semi-adequate links were studied in \cite{LPS_2001, DL_2006,JRS_2010, St_2011, St_2014, KL_2014, AD_2017,Kalfagianni_2018}. Applications to the geometry and topology of surfaces in the link complement were studied in \cite{FKP_2011, Ozawa_2011, FKP_2013, BMPW_2015, LK_2018}. The Khovanov homology of adequate and semi-adequate links were studied in \cite{Kh_2003, AP_2004, PS_2014, Lee_2018, PS_2020}.

\subsection{The Turaev surface}
Let $D$ be a diagram of the non-split link $L$ in $S^3$ where we interpret $D$ as lying on a projection sphere $S^2\subset S^3$. Embed the all-$A$ and all-$B$ states $\sigma_A(D)$ and $\sigma_B(D)$ in a neighborhood of the projection sphere, but on opposite sides. Construct a cobordism between $\sigma_A(D)$ and $\sigma_B(D)$ that consists of saddles in neighborhoods of the crossings of $D$ (as in Figure \ref{fig:saddle}) and bands outside of neighborhoods of those crossings. The \textit{Turaev surface} $\Sigma_D$ of $D$ is obtained by capping off each boundary component of the cobordism with a disk.
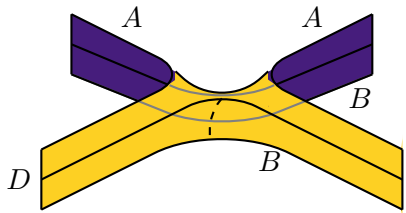
\begin{figure}[h]
\[\begin{tikzpicture}[scale=.8]
\begin{scope}[thick]
\draw [rounded corners = 10mm] (0,0) -- (3,1.5) -- (6,0);
\draw (0,0) -- (0,1);
\draw (6,0) -- (6,1);
\draw [rounded corners = 5mm] (0,1) -- (2.5, 2.25) -- (0.5, 3.25);
\draw [rounded corners = 5mm] (6,1) -- (3.5, 2.25) -- (5.5,3.25);
\draw [rounded corners = 5mm] (0,.5) -- (3,2) -- (6,.5);
\draw [rounded corners = 7mm] (2.23, 2.3) -- (3,1.6) -- (3.77,2.3);
\draw (0.5,3.25) -- (0.5, 2.25);
\draw (5.5,3.25) -- (5.5, 2.25);
\end{scope}

\begin{pgfonlayer}{background2}
\fill [lsugold]  [rounded corners = 10 mm] (0,0) -- (3,1.5) -- (6,0) -- (6,1) -- (3,2) -- (0,1); 
\fill [lsugold] (6,0) -- (6,1) -- (3.9,2.05) -- (4,1);
\fill [lsugold] (0,0) -- (0,1) -- (2.1,2.05) -- (2,1);
\fill [lsugold] (2.23,2.28) --(3.77,2.28) -- (3.77,1.5) -- (2.23,1.5);

\fill [white, rounded corners = 7mm] (2.23,2.3) -- (3,1.6) -- (3.77,2.3);
\fill [lsugold] (2,2) -- (2.3,2.21) -- (2.2, 1.5) -- (2,1.5);
\fill [lsugold] (4,2) -- (3.7, 2.21) -- (3.8,1.5) -- (4,1.5);
\end{pgfonlayer}

\begin{pgfonlayer}{background4}
\fill [lsupurple] (.5,3.25) -- (.5,2.25) -- (3,1.25) -- (2.4,2.2);
\fill [rounded corners = 5mm, lsupurple] (0.5,3.25) -- (2.5,2.25) -- (2,2);
\fill [lsupurple] (5.5,3.25) -- (5.5,2.25) -- (3,1.25) -- (3.6,2.2);
\fill [rounded corners = 5mm, lsupurple] (5.5, 3.25) -- (3.5,2.25) -- (4,2);
\end{pgfonlayer}

\draw [thick] (0.5,2.25) -- (1.6,1.81);
\draw [thick] (5.5,2.25) -- (4.4,1.81);
\draw [thick] (0.5,2.75) -- (2.1,2.08);
\draw [thick] (5.5,2.75) -- (3.9,2.08);

\begin{pgfonlayer}{background}
\draw [black!50!white, rounded corners = 8mm, thick] (0.5, 2.25) -- (3,1.25) -- (5.5,2.25);
\draw [black!50!white, rounded corners = 7mm, thick] (2.13,2.07) -- (3,1.7)  -- (3.87,2.07);
\end{pgfonlayer}
\draw [thick, dashed, rounded corners = 2mm] (3,1.85) -- (2.8,1.6) -- (2.8,1.24);
\draw (0,0.5) node[left]{$D$};
\draw (1.5,3.2) node{$A$};
\draw (4.5,3.2) node{$A$};
\draw (3.8,.8) node{$B$};
\draw (5.3, 1.85) node{$B$};
\end{tikzpicture}\]
    \caption{A saddle in a neighborhood of a crossing of $D$ transitions between the all-$A$ and all-$B$ states of $D$.}
       \label{fig:saddle}
\end{figure}

The genus $g_T(D)$ of the Turaev surface is given in Equation \eqref{eq:tg}. Since $g_T(D)=0$ if and only if $D$ is a connected sum of alternating diagrams, it follows that the Turaev genus of a non-split link $L$ is zero if and only if $L$ is alternating. Hence one can view the Turaev genus of a link as a measure of how non-alternating the link is. 

Dasbach et al. \cite{DFKLS_2008} proved that the Turaev surface is a closed, oriented Heegaard surface on which the link has an alternating projection. However, these properties do not characterize the Turaev surface \cite{Lowrance_2015, ADK_2015,Lowrance_2021}. The Turaev surface of a link diagram has been shown to have connections to the Jones polynomial \cite{BaeMorton_2003,DFKLS_2008,DFKLS_2010}, Khovanov homology \cite{Manturov_2003,Manturov_2006,CKS_2007,DL_2014}, knot Floer homology \cite{Lowrance_2008,DL_2011,JKK_2022}, and the colored Jones polynomial \cite{Kalfagianni_2018,KL_2023}. For further reading, see these two survey articles \cite{CK_2014,KK_2021}.

Computations of Turaev genus are known for several infinite families. As mentioned before, alternating knots have Turaev genus zero. Non-alternating pretzel links and non-alternating Montesinos links have Turaev genus one because their standard diagrams have genus one Turaev surfaces. Abe \cite{Abe_2009} proved that all adequate diagrams are Turaev genus minimizing. Abe and Kishimoto \cite{AK_2010} proved that the Turaev genus of the torus knot $T(3,3n+i)$ is $n$ for $i=1,2$. Jin, Lowrance, Polston, and Zheng \cite{JLPZ_2017} found the Turaev genus of $4$-stranded torus knots and many $5$- and $6$-stranded torus knots. Lowrance \cite{Lowrance_2011} computed the Turaev genus of many closed $3$-braids.

\section{First proof of the main theorem}
\label{sec:proof1}

In this section, we prove Theorem \ref{thm:main} using the two-variable Kauffman polynomial characterization of adequate links. We begin with the definition and results about the Kauffman polynomial. Kauffman \cite{Kauffman_1990} defined a two-variable Laurent polynomial $\Lambda_D(a,z)$ via the following rules:
\begin{enumerate}

\item $\Lambda_{\tikz[baseline=.6ex, scale = .3]{
\draw (0,0) -- (1,1);
\draw (1,0) -- (.7,.3);
\draw (.3,.7) -- (0,1);
}}(a,z) + \Lambda_{\tikz[baseline=.6ex, scale = .3]{
\draw (0,0) -- (.3,.3);
\draw (1,1) -- (.7,.7);
\draw (1,0) -- (0,1);
}}(a,z) = z\left( \Lambda_{\tikz[baseline=.6ex, scale = .3]{
\draw[rounded corners = 1.5mm] (0,0) -- (.45,.5) -- (0,1);
\draw[rounded corners = 1.5mm] (1,0) -- (.55,.5) -- (1,1);
}}(a,z) + \Lambda_{\tikz[baseline=.6ex, scale = .3]{
\draw[rounded corners = 1.5mm] (0,0) -- (.5,.45) -- (1,0);
\draw[rounded corners = 1.5mm] (0,1) -- (.5,.55) -- (1,1);
}} (a,z) \right),$

\item $\Lambda_{\tikz[scale = .3]{
\useasboundingbox (0,0) rectangle (2,1);
\begin{knot}[	
	%draft mode = crossings,
	consider self intersections,
 	clip width = 3,
 	ignore endpoint intersections = true,
	end tolerance = 2pt
	]
	\flipcrossings{1}
	\strand (0,0) to [out = 0 , in = 0, looseness=2]
	(1,1) to [out = 180, in = 180, looseness=2]
	(2,0);
\end{knot}
}}(a,z) = a\; \Lambda_{\tikz[baseline=.6ex, scale = .3]{
\useasboundingbox (0,0) rectangle (1.5,.5);
\draw (0,0) to [out = 30, in = 180] (.75,.5) to [out = 0, in = 150] (1.5,0);
}} (a,z)$, $\Lambda_{\tikz[scale = .3]{
\useasboundingbox (0,0) rectangle (2,1);
\begin{knot}[	
	%draft mode = crossings,
	consider self intersections,
 	clip width = 3,
 	ignore endpoint intersections = true,
	end tolerance = 2pt
	]
	%\flipcrossings{1}
	\strand (0,0) to [out = 0 , in = 0, looseness=2]
	(1,1) to [out = 180, in = 180, looseness=2]
	(2,0);
\end{knot}
}}(a,z) = a^{-1}\; \Lambda_{\tikz[baseline=.6ex, scale = .3]{
\useasboundingbox (0,0) rectangle (1.5,.5);
\draw (0,0) to [out = 30, in = 180] (.75,.5) to [out = 0, in = 150] (1.5,0);
}} (a,z)$,
\item $\Lambda_{\tikz[baseline=.6ex, scale = .3]{
\draw (.4,.4) circle (.4cm);
}} (a,z) = 1$.

\end{enumerate}
He showed that $\Lambda_D(a,z)$ is a regular isotopy invariant, that is, it is invariant under the second and third Reidemeister moves. The \textit{Kauffman polynomial} $F_L(a,z)$ of the link $L$ with a diagram $D$ of writhe $w(D)$ is defined as $F_L(a,z) = a^{-w(D)}\Lambda_D(a,z)$. The Kauffman polynomial $F_L(a,z)$ is an ambient isotopy invariant of oriented links, that is, it is an invariant under all three Reidemeister moves. In this paper, we prefer to use $\Lambda_D(a,z)$ rather than $F_L(a,z)$.

Kauffman \cite{Kauffman_1990} proved that $\Lambda_D(a,z)$ specializes to the Kauffman bracket $\langle D \rangle$ via the formula
\begin{equation}
\label{eq:KauffToKauff}
\langle D \rangle = \Lambda_D(-A^3, A+A^{-1}).
\end{equation}

Thistlethwaite \cite{T_1988_1, T_1988_2} proved the following theorem about the nonzero coefficients of $\Lambda_D(a,z)$.
\begin{theorem}[Thistlethwaite]
\label{thm:KauffmanSupport}
Let $D$ be a link diagram with $c(D)$ crossings, and let $\Lambda_D(a,z) = \sum_{r,s\in\mathbb{Z}} u_{r,s} \;a^r z^s$. 
\begin{enumerate}
\item If $u_{r,s} \neq 0$, then $|r| + s \leq c(D)$.
\item The diagram $D$ is adequate if and only if there are ordered pairs $(r_1,s_1)$ and $(r_2,s_2)$ such that $u_{r_1,s_1}\neq 0$ with $-r_1+s_1=c(D)$ and $u_{r_2,s_2}\neq 0$ with $r_2+s_2=c(D)$. 
\end{enumerate}
\end{theorem}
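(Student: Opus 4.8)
The plan is to prove part (1) by induction on the crossing number and part (2) by tracking the coefficients on the two extreme diagonals $r+s=c(D)$ and $-r+s=c(D)$ through the same recursion, with adequacy entering as exactly the condition that prevents those extreme coefficients from vanishing.

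For part (1), I would rewrite relation (1) at a chosen positive crossing as $\Lambda_{L_+}=z(\Lambda_{L_A}+\Lambda_{L_B})-\Lambda_{L_-}$, where $L_A$ and $L_B$ are the two resolutions (each with $c(D)-1$ crossings) and $L_-$ is the diagram with that crossing switched (still $c(D)$ crossings). The immediate obstacle is that $L_-$ has the same crossing number as $L_+$, so a naive induction on $c(D)$ does not terminate. I would resolve this with a secondary induction: fix a basepoint and direction on each component and call a crossing \emph{bad} if it is first met as an undercrossing, so that a diagram with no bad crossings is descending and hence represents an unlink. Inducting lexicographically on the pair (number of crossings, number of bad crossings), the two resolution terms are controlled by the primary hypothesis—multiplication by $z$ raises the $z$-exponent by one and keeps $|r|+s\le c(D)$—while $\Lambda_{L_-}$ has one fewer bad crossing and is controlled by the secondary hypothesis. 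For the base case, a descending diagram is regular isotopic, through Reidemeister II and III moves alone (under which $\Lambda_D$ is invariant), to a standard diagram of an unlink with curls, so $\Lambda_D=a^{w}\delta^{\,k-1}$ with $\delta=(a+a^{-1})z^{-1}-1$, $w=w(D)$, and $k$ the number of components; since every monomial of $\delta$ has $|r|+s=0$ and $s\le 0$, one checks directly that $|r|+s\le|w|\le c(D)$ on the whole support.

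For part (2), I would pass to the homogeneous part of $\Lambda_D$ on each extreme diagonal. Writing $d_-(a^rz^s)=-r+s$ and $d_+(a^rz^s)=r+s$, part (1) says $d_\pm\le c(D)$ on the support, and the goal is to show that $d_-=c(D)$ is attained iff $D$ is adequate at every crossing of $\sigma_A(D)$, and symmetrically that $d_+=c(D)$ is attained iff $D$ is adequate at every crossing of $\sigma_B(D)$. Taking the $d_-=c(D)$ part of the skein relation gives a recursion $P_{L_+}=z(P_{L_A}+P_{L_B})-P_{L_-}$ for the extreme-diagonal parts $P_\bullet$, valid because the resolution diagrams have $c(D)-1$ crossings and so their $d_-=c(D)-1$ parts are their own extreme parts. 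I would analyze $P_D$ along the same descending induction as in part (1); the base case $\Lambda_D=a^{w}\delta^{\,k-1}$ satisfies $P_D\neq 0$ precisely when $w=-c(D)$, i.e.\ when all crossings are negative. The bridge to adequacy is that this recursion is driven by how a single resolution-flip changes the number of loops in the all-$A$ state: $A$-adequacy is exactly the condition that flipping any one crossing of $\sigma_A(D)$ merges two loops and hence lowers the loop count, which is the same mechanism that makes the extreme coefficient of the Kauffman bracket nonzero exactly under adequacy. I would show that under adequacy the surviving contributions reach the extreme diagonal without cancelling, producing a nonzero coefficient there, whereas if some crossing of $\sigma_A(D)$ has both arcs on one loop the competing contributions cancel and force $\max d_-$ strictly below $c(D)$; the symmetric argument with the all-$B$ state treats $d_+$.

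The main obstacle lies entirely in part (2). The switch term $-\Lambda_{L_-}$ keeps the crossing number fixed and also feeds the extreme diagonal, so the recursion for $P_D$ does not decouple the way it does for a naive state sum, and the contributions of states other than the all-$A$ (resp.\ all-$B$) state must be tracked carefully. Converting the heuristic ``the all-$A$ state dominates the extreme diagonal and adequacy forbids cancellation'' into a cancellation-controlled computation—matching the extreme-diagonal coefficient to a signed loop count that is forced to be nonzero exactly under adequacy, and ruling out interference from both the switched diagrams and the competing global loop counts—is the delicate combinatorial step, and is where I expect the genuine work of Thistlethwaite's argument to reside.
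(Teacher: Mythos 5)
The paper itself does not prove this theorem: it is quoted from Thistlethwaite \cite{T_1988_1, T_1988_2}, so your attempt has to be measured against Thistlethwaite's cited arguments rather than anything in the text. Your part (1) is essentially right and matches the standard (and original) approach: the lexicographic induction on the pair (crossing number, number of bad crossings) using $\Lambda_{L_+}=z(\Lambda_{L_0}+\Lambda_{L_\infty})-\Lambda_{L_-}$, with descending diagrams as the base case and the direct check that every monomial of $a^{w}\delta^{k-1}$ satisfies $|r|+s\le |w|\le c(D)$, is exactly how such degree bounds are established, and the orientation issues you would need to handle (the $\infty$-smoothing destroys orientations, but orientations are only needed for the switch term) are manageable.

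Part (2) contains a genuine gap, one you flag yourself: the whole if-and-only-if is deferred to a ``cancellation-controlled computation'' that the proposal does not supply, and the skein-induction frame you set up is structurally unable to deliver it. The inductive statement you would need --- ``the diagonal $-r+s=c(D)$ is attained iff $D$ is $A$-adequate'' --- does not propagate through the recursion $P_{L_+}=z(P_{L_A}+P_{L_B})-P_{L_-}$, because adequacy is not hereditary under any of the three operations: switching the crossing exchanges which local resolution is the $A$-resolution there, so the all-$A$ state of $L_-$ is a mixed state of $D$; likewise the all-$A$ state of the $B$-smoothed diagram differs from $\sigma_A(D)$. Knowing by induction whether each of $L_A$, $L_B$, $L_-$ attains its own extreme diagonal says nothing about cancellation among the three top-degree parts on the right-hand side, and both directions of the iff founder on precisely this interference; your base-case calibration ($P_D\neq 0$ iff $w=-c$) anchors nothing without that control. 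Thistlethwaite's actual argument avoids crossing-switch induction in the extremal analysis: he identifies the extreme-diagonal part of $\Lambda_D$ explicitly via a state expansion and degree count in which only the all-$A$ (resp.\ all-$B$) state can contribute to the diagonal $-r+s=c(D)$ (resp.\ $r+s=c(D)$), with $A$-adequacy (resp.\ $B$-adequacy) being exactly the condition that the dominating contribution survives uncancelled --- this is why the $a$-degree of that part is pinned at $1-|\sigma_A(D)|$, the fact exploited in Section 3 of this paper. The strict-inequality half (a crossing with both arcs on one state loop forces the diagonal strictly below $c(D)$) is the real content of the theorem, and your proposal asserts it rather than proves it.
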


Define $\Span_a \Lambda_D(a,z) = \max\{r| u_{r,s}\neq 0\} - \min\{r | u_{r,s}\neq 0\}$ where $\Lambda_D(a,z) = \sum u_{r,s} \; a^r z^s$. An \textit{arc presentation} of a link $L$ is an embedding of $L$ into finitely many half-planes whose boundary is the $z$-axis such that each half-plane intersects the link in a single properly embedded arc. The \textit{arc index} $\alpha(L)$ of the link $L$ is the minimum number of half-planes in any arc presentation of $L$. Combining several results about the arc index of a link yields the following result.
\begin{proposition}
\label{prop:aspan}
Let $D$ be a diagram of a non-alternating link. Then $\Span_a \Lambda_D(a,z) \leq c(D)-2$.
\end{proposition}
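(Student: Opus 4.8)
The plan is to control the $a$-span by the arc index $\alpha(L)$ and then to bound the arc index by the crossing number. First I would note that since $F_L(a,z)=a^{-w(D)}\Lambda_D(a,z)$, multiplication by the monomial $a^{-w(D)}$ shifts every exponent of $a$ equally and so leaves the $a$-span unchanged; hence $\Span_a\Lambda_D(a,z)=\Span_a F_L(a,z)$, and it suffices to bound the $a$-span of the ambient isotopy invariant $F_L$. The first imported result is the theorem of Morton and Beltrami bounding the arc index below by the $a$-span of the Kauffman polynomial, $\alpha(L)\ge \Span_a F_L(a,z)+2$, equivalently $\Span_a F_L(a,z)\le \alpha(L)-2$.

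The second ingredient is a sharp upper bound for the arc index of a non-alternating link. Bae and Park's general bound gives $\alpha(L)\le c(L)+2$, and its non-alternating refinement improves this to $\alpha(L)\le c(L)$: the extremal value $c(L)+2$ is attained exactly by non-split alternating links, and the arc index drops by at least two as soon as the link fails to be alternating. Since $L$ is non-split every diagram of $L$ is connected, and any diagram has at least $c(L)$ crossings, so $c(L)\le c(D)$. Chaining the inequalities yields
\[\Span_a\Lambda_D(a,z)=\Span_a F_L(a,z)\le \alpha(L)-2\le c(L)-2\le c(D)-2,\]
which is the claim.

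The crux is the sharp non-alternating arc index bound $\alpha(L)\le c(L)$: the bound $\alpha(L)\le c(L)+2$ by itself only yields $\Span_a\Lambda_D(a,z)\le c(D)$ and loses the decisive $-2$, so one must genuinely use the \emph{gap} that a non-alternating link has arc index at most $c(L)$ rather than $c(L)+1$ or $c(L)+2$. Two points require care. First, the hypothesis is really that $L$ is non-split: for a split non-alternating link, such as the split union of the trefoil and $8_{19}$, the Kauffman polynomial acquires the factor $(a+a^{-1})z^{-1}-1$, which enlarges the $a$-span past $c(D)-2$, so the statement should be read with the paper's standing non-split convention. Second, to cover composite links cleanly it is preferable to invoke the non-alternating arc index estimate in its diagrammatic form, $\alpha(L)\le c(D)$ for a connected non-alternating diagram $D$, which sidesteps the still-open additivity of crossing number under connected sum that an argument routed through $c(L)$ would otherwise seem to require.
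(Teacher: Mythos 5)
Your core chain is exactly the paper's proof: the observation that $\Span_a \Lambda_D(a,z) = \Span_a F_L(a,z)$, the Morton--Beltrami bound $\Span_a F_L(a,z) \leq \alpha(L)-2$ \cite{MB_1998}, and the non-alternating arc-index bound $\alpha(L) \leq c(L) \leq c(D)$. For prime links this is correct and is the same argument the paper gives.

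Two divergences are worth flagging, one of which is a genuine soft spot. First, attribution and hypotheses: the refinement $\alpha(L) \leq c(L)$ is due to Jin and Park \cite{JP_2010}, not Bae and Park (who proved $\alpha(L) \leq c(L)+2$ and conjectured the improvement), and more importantly it is proved there for minimum crossing diagrams of \emph{prime} non-alternating links. Your displayed chain therefore does not literally cover composite links, as your final paragraph half-acknowledges. Second, your proposed repair for the composite case --- invoking a ``diagrammatic form'' $\alpha(L) \leq c(D)$ for every connected non-alternating diagram --- is a statement \emph{stronger} than what \cite{JP_2010} proves; before leaning on it you would have to verify that their construction really applies to non-minimal diagrams of non-prime links, so as written this step is an unverified strengthening rather than a citation. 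The paper closes the composite case differently: it uses multiplicativity of the Kauffman polynomial under diagrammatic connected sum. If $D = D_1 \# D_2$, then $\Lambda_D = \Lambda_{D_1}\Lambda_{D_2}$, and since the top and bottom $a$-coefficients live in the integral domain $\mathbb{Z}[z,z^{-1}]$, the $a$-span is additive, while $c(D) = c(D_1)+c(D_2)$; one then applies the prime case to a non-alternating summand and the weaker bound $\Span_a \Lambda_{D_i} \leq c(D_i)$ (Morton--Beltrami plus the Bae--Park bound $\alpha \leq c+2$) to the remaining summands. This works at the level of diagrammatic summands and so sidesteps the crossing-number-additivity worry you raise, without requiring any new theorem about arc index. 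Finally, your split-link caveat is correct and well taken: the disjoint-union factor $(a+a^{-1})z^{-1}-1$ has $a$-span two, so the proposition must indeed be read with the paper's standing non-split hypothesis, which is how it is used (it is applied to a minimum crossing diagram of a non-split link $L$).
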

\begin{proof}
Morton and Beltrami \cite{MB_1998} proved that $\Span_a\Lambda_D(a,z) \leq \alpha(L) - 2$ where $\alpha(L)$ denotes the arc index of the link $L$.  Jin and Park \cite{JP_2010} proved that if $D$ is a minimum crossing diagram of a prime non-alternating link, then $\alpha(L) \leq c(D)=c(L)$. Combining these two inequalities gives the result for prime diagrams. For composite diagrams, the result follows from the additivity of the span of the Kauffman polynomial.
\end{proof}

We are now ready to give our first proof of Theorem \ref{thm:main}. For completeness, we prove both directions of the statement, even though the forward implication was previously known.
\begin{proof}[Proof of Theorem \ref{thm:main}]
Suppose that $L$ is adequate and $g_T(L)=1$. Let $D$ be an adequate diagram of $L$. Thistlethwaite \cite{T_1988_2} proved that $D$ minimizes crossing number, that is $c(D)=c(L)$, and Abe \cite{Abe_2009} proved that $D$ minimizes Turaev genus, that is $g_T(D)=g_T(L)=1$. Lickorish and Thistlethwaite \cite{LT_1988} proved that the extreme coefficients $a_m$ and $a_M$ of $\langle D \rangle$ both have absolute value one. Therefore Remark \ref{rem:extreme} implies $\Span V_L(t) = c(L) - 1$, as desired.

Now suppose that $\Span V_L(t) = c(L) - 1$. Since $\Span V_L(t) < c(L)$, it follows that $L$ is non-alternating. Let $D$ be a minimum crossing diagram of $L$. Since $L$ is non-alternating, Remark \ref{rem:extreme} implies that $g_T(D)=1$ and the extreme coefficients $a_m$ and $a_M$ of $\langle D \rangle$ are non-zero. Since $g_T(D)=1$, it follows that $|\sigma_A(D)| + |\sigma_B(D)| = c(D)$.

Express $\Lambda_D(a,z)$ as the sum $\Lambda_D(a,z) = \sum_{r,s} u_{r,s} \; a^r z^s$. Since $\langle D \rangle = \Lambda_D(-A^3, A+A^{-1})$ by Equation \eqref{eq:KauffToKauff}, it follows that there are ordered pairs $(r_{\max},s_{\max})$ and $(r_{\min},s_{\min})$ with 
$u_{r_{\max},s_{\max}}\neq 0$ and $u_{r_{\min},s_{\min}}\neq 0$ satisfying the inequalities
\begin{align}
	3r_{\max}+s_{\max} & \; \geq M~\text{and} \label{ineq:max1}\\
	3r_{\min} - s_{\min} & \; \leq m \label{ineq:min1}.
\end{align}
Part (1) of Theorem \ref{thm:KauffmanSupport} implies that the following inequalities hold:
\begin{align}
	r_{\max} + s_{\max} & \; \leq c(D)~\text{and} \label{ineq:max2}\\
	-r_{\min} + s_{\min} & \; \leq c(D). \label{ineq:min2}
\end{align}
Inequalities \eqref{ineq:max1} and \eqref{ineq:max2} imply that $2r_{\max} \geq M - c(D)= 2|\sigma_A(D)|-2$, and Inequalities \eqref{ineq:min1} and \eqref{ineq:min2} imply that $2r_{\min} \leq m + c(D) = -2|\sigma_B(D)|+2$. Hence $r_{\max} - r_{\min} \geq |\sigma_A(D)| + |\sigma_B(D)| -2 = c(D) -2$. Since Proposition \ref{prop:aspan} implies that $r_{\max}-r_{\min} \leq c(D)-2$, it follows that $r_{\max}-r_{\min} = c(D)-2$. This forces $r_{\max} = |\sigma_{A}(D)| -1$ and $r_{\min} = 1- |\sigma_{B}(D)|$. Inequalities \eqref{ineq:max1} and \eqref{ineq:max2} imply that $s_{\max} = c(D) - |\sigma_{A}(D)|+1$, and Inequalities \eqref{ineq:min1} and \eqref{ineq:min2} imply that $s_{\min} = c(D) - |\sigma_{B}(D)|+1$.  Since equality holds in Inequalities \eqref{ineq:max2} and \eqref{ineq:min2}, Part (2) of Theorem \ref{thm:KauffmanSupport} implies that $D$ is adequate, as desired. 
\end{proof}

The above proof of Theorem \ref{thm:main} can potentially be generalized to a proof of Conjecture \ref{conj:main}, but such a generalization requires the following strengthening of Proposition \ref{prop:aspan}.
\begin{conjecture}
\label{conj:aspan}
If a link $L$ has crossing number $c(L)$, Turaev genus $g_T(L)$, and diagram $D$, then
\[\Span F_L(a,z) = \Span \Lambda_D(a,z) \leq c(L) - 2g_T(L).\]
\end{conjecture}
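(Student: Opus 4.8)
The plan is to read $\Span$ in the conjecture as the span $\Span_a$ in the variable $a$, since $\Span_a F_L = \Span_a \Lambda_D$ (multiplication by $a^{-w(D)}$ shifts every $a$-degree uniformly and leaves the $z$-degrees untouched), and to reduce the bound to a statement about the arc index, exactly as in the proof of Proposition \ref{prop:aspan}. Morton and Beltrami \cite{MB_1998} give $\Span_a \Lambda_D \leq \alpha(L) - 2$, so it suffices to establish the Turaev-genus refined arc index bound
\[\alpha(L) \leq c(L) + 2 - 2g_T(L).\]
This inequality interpolates between the two ingredients of Proposition \ref{prop:aspan}: for alternating links it reads $\alpha(L) \leq c(L) + 2$, which is sharp by Bae and Morton \cite{BaeMorton_2003}, while for $g_T(L) = 1$ it reads $\alpha(L) \leq c(L)$, recovering Jin and Park's bound \cite{JP_2010} for prime non-alternating links. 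Thus the conjecture amounts to extracting from the Turaev genus a saving of $2g_T(L)$ over the alternating arc index, whereas the existing results extract only the first such saving.

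Concretely, I would aim for the diagram-level construction: from a diagram $D$ of $L$, produce an arc presentation of $L$ using at most $c(D) + 2 - 2g_T(D)$ half-planes. Applying this to a minimum crossing diagram $D_0$, for which $c(D_0) = c(L)$ and $g_T(D_0) \geq g_T(L)$, would give $\alpha(L) \leq c(L) + 2 - 2g_T(D_0) \leq c(L) + 2 - 2g_T(L)$, which is in fact stronger than required. The starting point is the arc presentation read off from $D$, which uses about $c(D) + 2$ arcs when $D$ is alternating; the task is to show that each handle of the Turaev surface $\Sigma_D$ permits the elimination of two arcs, one for each of its two homology generators, for a total saving of $2g_T(D)$. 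Here I would exploit the alternating diagram that $L$ carries on $\Sigma_D$ together with the band and saddle decomposition underlying equation \eqref{eq:tg}: the rank-$2g_T(D)$ part of $H_1(\Sigma_D)$ beyond the planar case should correspond to arcs that can be amalgamated. Jin and Park's argument already performs such an amalgamation once; the goal is to iterate it, handle by handle.

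The main obstacle is precisely this iteration. Jin and Park's reduction is a single, essentially local modification of a minimum crossing diagram, and it is far from clear that it can be repeated independently for each handle, since the modifications interact and there is no known monotone relationship between arc index and Turaev genus guaranteeing a linear saving. An alternative, more algebraic route bypasses arc index altogether and attacks the extreme $a$-degrees of $\Lambda_D$ directly, aiming to prove the sharper support bounds $\max\deg_a \Lambda_D \leq |\sigma_A(D)| - 1$ and $\min\deg_a \Lambda_D \geq 1 - |\sigma_B(D)|$, whose difference is exactly $|\sigma_A(D)| + |\sigma_B(D)| - 2 = c(D) - 2g_T(D)$. One would attempt this by induction on $c(D)$ through the Kauffman skein relation of Section \ref{sec:proof1}, tracking how the all-$A$ and all-$B$ states behave when a crossing is resolved, in the spirit of Thistlethwaite's proof of Theorem \ref{thm:KauffmanSupport}. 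The difficulty there is controlling cancellation: the skein relation couples the $a$- and $z$-degrees, so a term of near-extremal $a$-degree can be produced by the $z(\cdots)$ term even when no single resolved diagram carries it, and ruling out a spurious increase in $\max\deg_a$ demands a genuinely two-variable, adequacy-type argument rather than the one-variable bookkeeping that suffices for the Kauffman bracket. I expect that whichever route one takes, the essential content is the same: proving that the Turaev genus, and not merely its nonvanishing, constrains the width of the Kauffman polynomial.
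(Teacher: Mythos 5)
Your target is Conjecture \ref{conj:aspan}, which the paper does not prove: it is stated as open, with the only supporting evidence being the alternating case (Yokota \cite{Yokota_1995}), the Turaev genus one case (Proposition \ref{prop:aspan} via Morton--Beltrami \cite{MB_1998} and Jin--Park \cite{JP_2010}), and computer verification for knots through twelve crossings. So there is no paper proof to compare against, and your proposal --- by your own candid account --- is not a proof either; it is a pair of reduction strategies, each terminating in an acknowledged open problem. To your credit, the reductions themselves are sound and mirror exactly how the paper obtains the $g_T\le 1$ cases: reading $\Span$ as $\Span_a$ is the intended reading, $\Span_a F_L(a,z)=\Span_a\Lambda_D(a,z)$ since $a^{-w(D)}$ shifts all $a$-degrees uniformly, and the passage from a diagram-level bound applied to a minimal-crossing diagram $D_0$ to the link-level statement via $g_T(D_0)\ge g_T(L)$ is a correct (and slightly subtle) point, since $c(L)+2-2g_T(D_0)\le c(L)+2-2g_T(L)$.

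The genuine gap is the entire constructive core of either route. For route 1, the target $\alpha(L)\le c(L)+2-2g_T(L)$ is \emph{strictly stronger} than the conjecture, because Morton--Beltrami only gives $\Span_a\Lambda_D\le\alpha(L)-2$ and not a converse; you have therefore reduced an open problem to a harder one, with no actual amalgamation construction supplied --- the handle-by-handle iteration of Jin--Park's modification is precisely where their method stops, as you concede. It is worth noting that Section \ref{sec:cor} of the paper cites forthcoming work \cite{DL_2025} proving $\alpha(L)=c(L)-2g_T(L)+2$ for \emph{adequate} links, which simultaneously vindicates the plausibility of your target and shows that even the adequate case is new research-level work; the inadequate case, where the conjectured span inequality would typically be strict, is untouched by any known argument. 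For route 2, the diagram-level bounds $\max\deg_a\Lambda_D\le|\sigma_A(D)|-1$ and $\min\deg_a\Lambda_D\ge 1-|\sigma_B(D)|$ would indeed suffice and are consistent with the equalities $r_{\max}=|\sigma_A(D)|-1$, $r_{\min}=1-|\sigma_B(D)|$ forced in the paper's first proof of Theorem \ref{thm:main}, but the skein induction is not executed, and the cancellation problem you name is the whole difficulty: Thistlethwaite's Theorem \ref{thm:KauffmanSupport} controls the weight $|r|+s$ because that quantity behaves monotonically under the skein relation, whereas $r$ alone does not, and $|\sigma_A(D)|$, $|\sigma_B(D)|$ are not local to a single crossing resolution. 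Two further loose ends to mind if you pursue this: Jin--Park applies only to prime links, so route 1 needs an additivity argument for composite links as in the proof of Proposition \ref{prop:aspan}; and the conventions for the extreme $a$-degrees must be fixed carefully, since Yokota's equalities as quoted in Section \ref{sec:cor} have $|\sigma_A(D)|$ and $|\sigma_B(D)|$ in the opposite roles from your proposed bounds (the discrepancy is a sign/substitution convention in Equation \eqref{eq:KauffToKauff}, but an induction that mixes the two conventions will fail).
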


Yokota \cite{Yokota_1995} proved that $\Span F_L(a,z) = c(L)$ when $L$ is alternating, implying Conjecture \ref{conj:aspan} for alternating links. Proposition \ref{prop:aspan} implies the conjecture for Turaev genus one links. Using data from \cite{knotinfo}, we have confirmed Conjecture \ref{conj:aspan} for all knots with at most twelve crossings.

\section{Second proof of Theorem \ref{thm:main}}
\label{sec:proof2}

In this section, we give our second proof of Theorem \ref{thm:main}. First, we need a few additional results about extreme Jones coefficients and also about links with Turaev genus one.

Bae and Morton \cite{BaeMorton_2003} described the following procedure that uses the all-$A$ and all-$B$ states to compute the extreme coefficients $a_M$ and $a_m$ of $\langle D \rangle$ respectively. We focus on the procedure to compute $a_M$. Construct the decorated all-$A$ state by replacing each crossing $\tikz[baseline=.6ex, scale = .4]{
\draw (0,0) -- (1,1);
\draw (1,0) -- (.7,.3);
\draw (.3,.7) -- (0,1);
}$ with its $A$-resolution and a line segment connecting the two arcs of the $A$-resolution $
\tikz[baseline=.6ex, scale = .4]{
\draw[red] (.3,.5) -- (.7,.5);
\draw[rounded corners = 1.5mm] (0,0) -- (.45,.5) -- (0,1);
\draw[rounded corners = 1.5mm] (1,0) -- (.55,.5) -- (1,1);
}$. In general, the line segment is called the \textit{trace} of the crossing. If both endpoints of the trace lie on the same component of $\sigma_A(D)$, then we say that trace is an \textit{$A$-chord}. Let $\widetilde{\sigma_A}(D)$ be the all-$A$ state, decorated with its $A$-chords (and where the non-chord traces are deleted/ignored). A subset $C$ of $A$-chords in $\widetilde{\sigma_A}(D)$ is \textit{independent} if the endpoints of each pair of $A$-chords that lie on the same component do not alternate in order around that component. The empty set of chords is declared to be independent.
\begin{theorem}[Bae, Morton]
    \label{thm:extreme}
    The coefficient $a_M$ of the maximum degree term in $\langle D \rangle$ is
    \[a_M = (-1)^{|\sigma_A(D)|-1} \sum (-1)^{|C|}\]
    where the sum is over all independent subsets $C$ of $A$-chords in the decorated all-$A$ state $\widetilde{\sigma_A}(D)$. Similarly, the coefficient $a_m$ of the minimum degree term in $\langle D \rangle$ is given by
    \[a_m = (-1)^{|\sigma_B(D)|-1} \sum (-1)^{|C|}\]
    where the sum is over all independent subsets $C$ of $B$-chords in the decorated all-$B$ state $\widetilde{\sigma_B}(D)$.
\end{theorem}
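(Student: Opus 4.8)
The plan is to expand $\langle D \rangle$ as a sum over all Kauffman states and to identify precisely which states contribute to the extreme term $A^M$. Writing each state $s$ as a choice of resolution at every crossing, with $|s|$ the number of resulting loops, the defining relations give
\[
\langle D \rangle = \sum_s A^{a(s)-b(s)}(-A^2-A^{-2})^{|s|-1},
\]
where $a(s)$ and $b(s)$ count the $A$- and $B$-resolutions of $s$. I would record each state $s$ by the set $T$ of crossings at which it differs from $\sigma_A(D)$, so that $a(s)-b(s)=c(D)-2|T|$ and the top $A$-degree of its contribution is $c(D)-2|T|+2(|s|-1)$. Since switching a single resolution changes the loop count by $\pm 1$, starting from $|\sigma_A(D)|$ loops we always have $|s|\le |\sigma_A(D)|+|T|$, with equality exactly when each switch raises the count by one; a short computation then shows this maximal value is precisely what makes the top degree equal $M$, and that every other state has strictly smaller top degree. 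First I would conclude that the state with switching set $T$ contributes to $A^M$ if and only if switching $T$ raises the loop count by exactly $|T|$, in which case its contribution to the coefficient is $(-1)^{|\sigma_A(D)|+|T|-1}$.

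The heart of the argument is the combinatorial claim that switching a set $T$ of crossings raises the loop count by exactly $|T|$ if and only if $T$ is an independent set of $A$-chords. The key observation is that, because the total change equals $|T|$ while each of the $|T|$ switches contributes $\pm 1$, every switch in every order must raise the count by one, so no two distinct loops are ever merged during the process. From this I would deduce both necessity conditions: if some crossing in $T$ were not an $A$-chord, its two trace endpoints would lie on distinct loops that, never having merged, are still distinct when it is switched, so switching it would merge them and lower the count; and if two chords of $T$ on a common loop alternated, then (ordering that chord first) switching it would separate the endpoints of the second onto two loops that remain distinct, so switching the second would again merge and lower the count. For sufficiency, I would treat each loop of $\sigma_A(D)$ separately, since a chord only affects the loop carrying it and its descendants, and then induct on a family of pairwise non-alternating chords on a single circle: such chords form a nested-or-disjoint (laminar) family, so an innermost chord cuts off a loop containing no other chord endpoints, raising the count by one and reducing to a strictly smaller instance.

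With this characterization in hand, summing the contributions $(-1)^{|\sigma_A(D)|+|T|-1}$ over all independent sets $T=C$ of $A$-chords (the empty set included, which accounts for the all-$A$ state itself) yields
\[
a_M = (-1)^{|\sigma_A(D)|-1}\sum_C (-1)^{|C|},
\]
as claimed. The formula for $a_m$ then follows by the symmetry under $A \mapsto A^{-1}$: this substitution sends $\langle D \rangle$ to the bracket of the mirror diagram, interchanging $A$- and $B$-resolutions, the states $\sigma_A(D)$ and $\sigma_B(D)$, and $A$-chords with $B$-chords, while carrying the maximum-degree coefficient to the minimum-degree one. I expect the main obstacle to be the combinatorial claim of the second paragraph, and in particular verifying cleanly that the global maximality condition $|s|=|\sigma_A(D)|+|T|$ is equivalent to the purely pairwise independence condition, rather than to some more complicated higher-order interlacement condition, together with carefully tracking chords whose endpoints lie on loops created by earlier switches.
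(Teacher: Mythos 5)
The paper itself offers no proof of Theorem \ref{thm:extreme}: it is imported verbatim from Bae and Morton \cite{BaeMorton_2003}, so there is no internal argument to compare yours against. Judged on its own merits, your proposal is correct, and it is essentially the standard state-sum proof underlying the cited result: expand $\langle D \rangle$ over Kauffman states, observe that a state with switching set $T$ contributes to degree $M$ exactly when $|s| = |\sigma_A(D)| + |T|$, and characterize this equality as independence of $T$ as a set of $A$-chords. The two delicate points are handled soundly. For necessity, your key observation --- that the final loop count is order-independent while each switch changes it by $\pm 1$, so equality forces \emph{every} switch in \emph{every} ordering to be a split --- correctly rules out both non-chord traces (whose endpoints lie on descendants of two distinct original loops, which splits can never merge) and interleaved chord pairs (switching the first chord separates the second chord's endpoints onto two loops that remain distinct). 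For sufficiency, the laminar/innermost-chord induction works because pairwise non-interleaving chords on a circle are nested or disjoint, and re-smoothing a self-site of a planar circle always splits it; it would be worth stating this planarity fact explicitly, since it is exactly what aligns the $\pm 1$ dichotomy with the chord/non-chord distinction. The deduction of the $a_m$ formula via $A \mapsto A^{-1}$ and mirroring, which swaps $\widetilde{\sigma_A}$ with $\widetilde{\sigma_B}$, is also correct.
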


Manch\'on \cite{Manchon_2004} continued the study of extreme Jones coefficients. Several groups of authors used a similar approach to study the extreme terms in Khovanov homology \cite{GMMS_2018, PS_2018, PS_2020, FS_2021,BGMVRS_2025}. 

\begin{example}
    \label{ex:BM}
    Let $D$ be the diagram in Figure \ref{fig:BM}. Its decorated all-$A$ state $\widetilde{\sigma_A}(D)$ has three $A$-chords, labeled $1$, $2$, and $3$. The subsets of independent $A$-chords are $\emptyset$, $\{1\}$, $\{2\}$, $\{3\}$, and $\{1,2\}$. Theorem \ref{thm:extreme} implies 
    \[a_M = (-1)^{9-1}(1 -3 +1) = -1.\]
    There are no $B$-chords in $\widetilde{\sigma_B}(D)$. Hence $D$ is $B$-adequate, the only independent subset of $B$-chords is empty, and Theorem \ref{thm:extreme} implies 
    \[a_m = (-1)^{6-1}(1) = -1.\]
\end{example}
\begin{figure}[h]
\[\begin{tikzpicture}[scale=.4]

\begin{knot}[ 	
	%draft mode = crossings,
	consider self intersections,
 	clip width = 6,
 	ignore endpoint intersections = true,
	end tolerance = 2pt
	]
	\flipcrossings{2,4,6,8,9,11,13,15}
	\strand[thick, rounded corners = 1mm]  (3,0) -- (0,0) -- (0,1) -- (1,2) -- (2,1) -- (3,2) -- (1,4) -- (3,6) -- (2,7) -- (1,6) --(0,7) -- (0,8) -- (9.5,8) -- (9.5,5.5) -- (8.5,4.5) -- (9.5,3.5) -- (8.5,2.5) -- (5.5,5.5) -- (4.5,4.5) --  (5.5,3.5) -- (3,1) -- (2,2) -- (1,1) -- (0,2) -- (2,4) -- (0,6) -- (1,7) -- (2,6) -- (3,7) -- (5.5,4.5) -- (4.5,3.5) -- (5.5,2.5) -- (8.5,5.5) -- (9.5,4.5) -- (8.5,3.5) -- (9.5,2.5) -- (9.5,0) -- (3,0);
	
\end{knot}

\draw (4.75,-1) node{$D$};

\begin{scope}[xshift = 10.5cm]

	\draw[thick,red] (1.2,4.5) -- (1.8,4.5);
	\draw[thick,red] (1.2,3.5) -- (1.8,3.5);
	\draw[thick,red] (7,3.7) -- (7,4.3);
	\draw[red] (0.6,4.5) node{\color{red}{\small{$1$}}};
	\draw[red] (0.6,3.5) node{\color{red}{\small{$2$}}};
	\draw[red] (7,4.9) node{\color{red}{\small{$3$}}};
	\draw[thick,red,densely dotted] (.2,1.5) -- (.7,1.5);
	\draw[thick,red,densely dotted] (1.3,1.5) -- (1.7,1.5);
	\draw[thick,red,densely dotted] (2.3,1.5) -- (2.8,1.5);
	\draw[thick,red,densely dotted] (.2,6.5) -- (.7,6.5);
	\draw[thick,red,densely dotted] (1.3,6.5) -- (1.7,6.5);
	\draw[thick,red,densely dotted] (2.3,6.5) -- (2.8,6.5);
	\draw[thick,red,densely dotted] (5,4.8) -- (5,5.3);
	\draw[thick,red, densely dotted] (5,4.2) -- (5,3.8);
	\draw[thick,red, densely dotted] (5,3.2) -- (5,2.7);
	\draw[thick,red,densely dotted] (9,4.8) -- (9,5.3);
	\draw[thick,red, densely dotted] (9,4.2) -- (9,3.8);
	\draw[thick,red, densely dotted] (9,3.2) -- (9,2.7);

	\draw[thick,rounded corners = 1mm] (3,0) -- (0,0) -- (0,1) -- (.3,1.5) -- (0,2) -- (1,3) -- (1.3,3.5) -- (1,4) -- (1.3,4.5) -- (1,5) -- (0, 6) -- (.3,6.5) -- (0,7) -- (0,8) -- (9.5,8) -- (9.5,5.5) -- (9,5.2) -- (8.5,5.5) -- (7,4.2) -- (5.5,5.5) -- (5,5.2) -- (4.5,5.5) -- (3,7) -- (2.7,6.5) -- (3,6) -- (2,5) -- (1.7,4.5) -- ( 2,4) -- (1.7,3.5) -- ( 2,3) -- (3,2) -- (2.7,1.5) -- (3,1) -- (4.5,2.5) -- (5,2.8) -- (5.5,2.5) -- (7,3.8) -- ( 8.5,2.5) -- (9,2.8) -- (9.5,2) -- (9.5,0) -- (3,0);
	\draw[thick] (1,1.5) circle (.3cm);
	\draw[thick] (2,1.5) circle (.3cm);
	\draw[thick] (1,6.5) circle (.3cm);
	\draw[thick] (2,6.5) circle (.3cm);
	\draw[thick] (5,3.5) circle (.3cm);
	\draw[thick] (5,4.5) circle (.3cm);
	\draw[thick] (9,3.5) circle (.3cm);
	\draw[thick] (9,4.5) circle (.3cm);
	
	\draw (4.75,-1) node{$\widetilde{\sigma_A}(D)$};

\end{scope}

\begin{scope}[xshift = 21cm]

	\draw[thick,blue,densely dotted] (.5,1.2) -- (.5,1.85);
	\draw[thick,blue,densely dotted] (1.5,1.2) -- (1.5,1.85);
	\draw[thick,blue,densely dotted] (2.5,1.2) -- (2.5,1.85);
	\draw[thick,blue,densely dotted] (.5,6.15) -- (.5,6.8);
	\draw[thick,blue,densely dotted] (1.5,6.15) -- (1.5,6.8);
	\draw[thick,blue,densely dotted] (2.5,6.15) -- (2.5,6.8);
	\draw[thick,blue,densely dotted] (1.5,3.7) -- (1.5,3.15);
	\draw[thick,blue,densely dotted] (1.5,4.3) -- (1.5,4.85);
	\draw[thick,blue,densely dotted] (4.75,4) -- (5.25,4);
	\draw[thick,blue,densely dotted] (4.75,5) -- (5.25,5);
	\draw[thick,blue,densely dotted] (4.75,3) -- (5.25,3);
	\draw[thick,blue,densely dotted] (8.75,4) -- (9.25,4);
	\draw[thick,blue,densely dotted] (8.75,5) -- (9.25,5);
	\draw[thick,blue,densely dotted] (8.75,3) -- (9.25,3);
	\draw[thick,blue,densely dotted] (6.75,4) -- (7.25,4);

	\draw[thick, rounded corners = 1mm] (3,0) -- (0,0) -- (0,1) -- (0.5,1.3) -- (1,1)  -- (1.5,1.3) -- (2,1) -- (2.5,1.3) -- (3,1) -- (4.5,2.5) -- (4.8,3) -- (4.5,3.5) -- (4.8,4) -- (4.5,4.5) -- (4.8,5) -- (4.5,5.5)  -- (3,7) -- (2.5,6.7) -- (2,7) -- (1.5,6.7) -- (1,7) -- (0.5,6.7) -- (0,7) -- (0,8) -- (9.5,8) -- (9.5,5.5) -- (9.2,5) -- (9.5,4.5) -- (9.2,4) -- (9.5,3.5)  -- (9.2,3) -- (9.5,2.5)-- (9.5,0) -- (3,0);
	
	\draw[thick, rounded corners = 1mm] (6.5, 3.5) -- (6.8,4)  -- (6.5,4.5) -- (5.5,5.5) -- (5.2,5) -- (5.5,4.5) -- (5.2,4) -- (5.5,3.5) -- (5.2,3) -- (5.5,2.5) -- (6.5,3.5);
	\draw[thick, rounded corners = 1mm] (7.5,3.5) -- (7.2,4) -- (7.5,4.5) -- (8.5,5.5) -- (8.8,5) -- (8.5,4.5) -- (8.8,4) -- ( 8.5,3.5) -- (8.8,3) -- ( 8.5,2.5) -- (7.5,3.5);
	\draw[thick, rounded corners = 1mm] (0.5,5.5) -- (1,5) -- (1.5,4.8) -- (2,5) -- (3,6) -- (2.5,6.2) -- (2,6) -- (1.5,6.2) -- ( 1,6) -- ( .5,6.2) -- (0,6) -- (0.5,5.5);
	\draw[thick] (1.5,4) circle (.3cm);
	\draw[thick, rounded corners = 1mm] (0.5,2.5) -- (1,3) -- (1.5,3.2) -- (2,3) -- (3,2) -- (2.5,1.8) -- (2,2) -- (1.5,1.8) -- (1,2) -- (.5,1.8) -- (0,2) -- (0.5,2.5);
	\draw (4.75,-1) node{$\widetilde{\sigma_B}(D)$};

\end{scope}

\end{tikzpicture}\]
\caption{A link diagram $D$ and its decorated all-$A$ and all-$B$ states $\widetilde{\sigma_A}(D)$ and $\widetilde{\sigma_B}(D)$. Chords are drawn with solid lines while traces that are not chords are drawn with dotted lines.}
\label{fig:BM}
\end{figure}
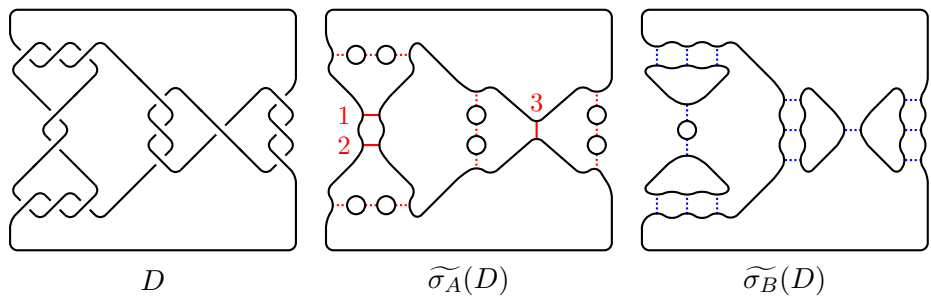

Remark \ref{rem:extreme} implies that if $D$ is a diagram such that $\Span V_L(t) = c(D) - g_T(D)$, then the extreme coefficients $a_m$ and $a_M$ must be non-zero. The following lemma describes a scenario where $a_M$ is zero. By replacing ``$A$" with ``$B$" throughout the lemma, one obtains conditions ensuring $a_m=0$.
\begin{lemma}
\label{lem:extremecoeff}
    Suppose that the decorated all-$A$ state $\widetilde{\sigma_A}(D)$ has at least one $A$-chord. Furthermore, suppose that for each component $\gamma$ of $\widetilde{\sigma_A}(D)$ all $A$-chords meeting $\gamma$ are on the interior of $\gamma$ or all $A$-chords meeting $\gamma$ are on the exterior of $\gamma$. Then the maximum coefficient $a_M$ of $\langle D \rangle$ is zero.
\end{lemma}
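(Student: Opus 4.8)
The plan is to feed the hypothesis into the Bae--Morton formula of Theorem~\ref{thm:extreme} and show that it forces \emph{every} subset of $A$-chords to be independent, at which point the signed count $\sum_C(-1)^{|C|}$ collapses to zero. The whole argument rests on one combinatorial consequence of the geometric hypothesis, which I would isolate first.

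\medskip

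\noindent\textbf{Step 1 (the key claim).} First I would prove: if two $A$-chords have all four of their endpoints on the same component $\gamma$ of $\widetilde{\sigma_A}(D)$ and both chords lie on the same side of $\gamma$, then their endpoints do not alternate around $\gamma$. Viewing $D$ on the projection sphere $S^2$, the simple closed curve $\gamma$ bounds two disks, an interior disk $\Delta$ and an exterior disk $\Delta'$. Each trace is an arc meeting $\gamma$ exactly in its two endpoints and otherwise lying in the open interior or the open exterior, and traces at distinct crossings are disjoint since they sit in disjoint crossing neighborhoods. If two such chords both lie on, say, the interior side, they are two disjoint arcs in the closed disk $\overline{\Delta}$ with all endpoints on $\partial\Delta=\gamma$; the first chord separates $\overline{\Delta}$ into two pieces and cuts $\gamma$ into two boundary arcs, one in each piece, so disjointness forces both endpoints of the second chord into a single boundary arc, i.e.\ the two chords do not alternate. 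The same reasoning applies on the exterior side using $\Delta'$. I expect this to be the main obstacle: one must verify that a trace genuinely lies on one side of $\gamma$ and resist the misleading intuition coming from the traces being drawn as short segments. The resolution is that the length of the arcs is irrelevant, as two \emph{disjoint} arcs in a disk can never have alternating endpoints on the boundary.

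\medskip

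\noindent\textbf{Step 2 (applying the hypothesis).} By hypothesis, for every component $\gamma$ of $\widetilde{\sigma_A}(D)$ all $A$-chords meeting $\gamma$ lie on a single side of $\gamma$. Combined with Step~1, no two $A$-chords that share a component alternate. Since a pair of $A$-chords can fail to be independent only when they share a component and their endpoints alternate, it follows that no pair of $A$-chords obstructs independence, and hence every subset of the set of all $A$-chords is independent.

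\medskip

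\noindent\textbf{Step 3 (the count).} Let $N\ge 1$ be the total number of $A$-chords, which is positive by hypothesis. By Theorem~\ref{thm:extreme} and Step~2, the independent subsets are exactly all subsets, so
\[
a_M=(-1)^{|\sigma_A(D)|-1}\sum_{C\ \text{independent}}(-1)^{|C|}=(-1)^{|\sigma_A(D)|-1}(1-1)^{N}=0 ,
\]
which is the desired conclusion. The parenthetical statement of the lemma follows by running the identical argument with $A$ replaced by $B$, applying the formula for $a_m$ in Theorem~\ref{thm:extreme} to the decorated all-$B$ state $\widetilde{\sigma_B}(D)$.
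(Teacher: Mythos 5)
Your proof is correct and takes essentially the same approach as the paper: the hypothesis forces every subset of $A$-chords to be independent, so the Bae--Morton formula of Theorem~\ref{thm:extreme} yields $a_M=(-1)^{|\sigma_A(D)|-1}\sum_{k=0}^{N}(-1)^k\binom{N}{k}=0$. The only difference is that your Step~1 spells out, via the disjoint-arcs-in-a-disk argument, the topological fact that the paper's one-line proof leaves implicit.
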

\begin{proof}
    Since every $A$-chord meeting a given component is on one side of the curve, it follows that every subset of $A$-chords is independent. Therefore Theorem \ref{thm:extreme} implies 
    \[a_M = (-1)^{|\sigma_A(D)|-1}\sum_{k=0}^n (-1)^k\binom{n}{k}=0.\]
\end{proof}

The classification of link diagrams whose Turaev surface has genus one in terms of their alternating tangle decompositions is an important piece of our second proof of Theorem \ref{thm:main}. The alternating tangle decomposition of a link diagram $D$ is constructed as follows. Consider the link diagram $D$ as a $4$-regular graph whose vertices are the crossings. An edge of $D$ is \textit{alternating} if one of its endpoints is an under-crossing and one of its endpoints is an over-crossing, and an edge of $D$ is \textit{non-alternating} if both of its endpoints are under-crossings or both of its endpoints are over-crossings. Mark each non-alternating edge of $D$ with two points. Inside each face of $D$, draw arcs that connect marked points that are adjacent on the boundary of the face but do not lie on the same non-alternating edge. These arcs form a collection of simple closed curves dividing the diagram $D$ into alternating tangles connected by non-alternating edges. See Figure \ref{fig:altdecomp} for an example.
\begin{figure}[h]
\[\begin{tikzpicture}
\begin{scope}[scale = .8]
\begin{scope}[thick]
	\draw (.7,0) -- (2.2,0);
	\draw (2.5,0) -- (2.8,0);
	\draw (2.1,-.3) -- (3.6,1.2);
	\draw (3.4,1.2) -- (3.4,2.2);
	\draw (3.4, .8) -- (3.4,.6);
	\draw (3.4, 2.6) -- (3.4,2.8);
	\draw (3.6,2.2) -- (2.6,3.2);
	\draw (2.3,3.5) -- (2.1,3.7);
	\draw (2.7,3.4) -- (.7,3.4);
	\draw (.9,3.3) -- (.1,2.5);
	\draw (1.1,3.5) -- (1.3,3.7);
	\draw (-.1,2.3) -- (-.3,2.1);
	\draw (0,2.7) -- (0,.7);
	\draw (0.1,.9) -- (.9,.1);
	\draw (-.1,1.1) -- (-.3,1.3);
	\draw (1.1,-.1) -- (1.3,-.3);
\end{scope}

\node (1) at (.7,.3){};
\node (2) at (.3,.7){};
\node (3) at (0,1.5) {};
\node (4) at (0,1.9) {};
\node (5) at (.3,2.7) {};
\node (6) at (.7,3.1) {};
\node (7) at (1.5,3.4) {};
\node (8) at (1.9, 3.4) {};
\node (9) at (3.4,1.9) {};
\node (10) at (3.4,1.5) {};
\node (11) at (3.1,.7) {};
\node (12) at (2.7,.3) {};

\begin{scope}[line/.style={shorten >=-0.2cm,shorten <=-0.2cm},thick,green!70!black]
\fill (1) circle (.1cm);
\fill (2) circle (.1cm);
\fill (3) circle (.1cm);
\fill (4) circle (.1cm);
\fill (5) circle (.1cm);
\fill (6) circle (.1cm);
\fill (7) circle (.1cm);
\fill (8) circle (.1cm);
\fill (9) circle (.1cm);
\fill (10) circle (.1cm);
\fill (11) circle (.1cm);
\fill (12) circle (.1cm);
\path [bend left, line]   (1) edge (12);
\path [bend left, line]   (3) edge (2);
\path [bend left, line]   (5) edge (4);
\path [bend left, line]   (7) edge (6);
\path [bend left, line]   (9) edge (8);
\path [bend left, line]   (11) edge (10);
\end{scope}

\end{scope}

\begin{scope}[xshift = 5cm,scale = .35]
\begin{knot}[ 	
	%draft mode = crossings,
	consider self intersections,
 	clip width = 6,
 	ignore endpoint intersections = true,
	end tolerance = 2pt
	]
	\flipcrossings{2,4,6,8,9,11,13,15}
	\strand[thick, rounded corners = 1mm]  (3,0) -- (0,0) -- (0,1) -- (1,2) -- (2,1) -- (3,2) -- (1,4) -- (3,6) -- (2,7) -- (1,6) --(0,7) -- (0,8) -- (9.5,8) -- (9.5,5.5) -- (8.5,4.5) -- (9.5,3.5) -- (8.5,2.5) -- (5.5,5.5) -- (4.5,4.5) --  (5.5,3.5) -- (3,1) -- (2,2) -- (1,1) -- (0,2) -- (2,4) -- (0,6) -- (1,7) -- (2,6) -- (3,7) -- (5.5,4.5) -- (4.5,3.5) -- (5.5,2.5) -- (8.5,5.5) -- (9.5,4.5) -- (8.5,3.5) -- (9.5,2.5) -- (9.5,0) -- (3,0);
	
\end{knot}

\draw[thick, rounded corners = 1mm, green!70!black] (-.5,3.5) -- (-.5,.5) -- (3.5,.5) -- (3.5,7.5) -- (-.5,7.5) -- (-.5,3.5);
\draw[thick, rounded corners = 1mm, green!70!black] (7,6) -- (4,6) -- (4,2) -- (10,2) -- (10,6) -- (7,6);

\end{scope}

\end{tikzpicture}\]
\caption{Left: arcs connecting marked points on non-alternating edges. Right: the alternating tangle decomposition of the diagram from Figure \ref{fig:BM}.}
\label{fig:altdecomp}
\end{figure}
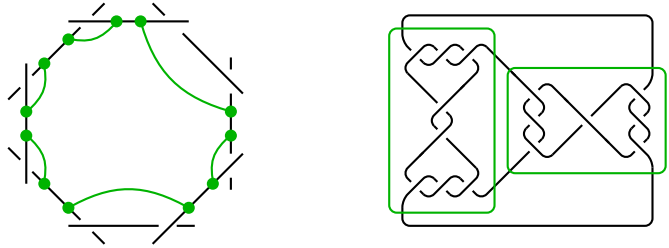

Kim \cite{Kim_2018} and independently Armond and Lowrance \cite{AL_2017} gave the following classification of the links with Turaev genus one.
\begin{theorem}[Kim, Armond, Lowrance]
\label{thm:tg1}
If $L$ is a non-split link with $g_T(L)=1$, then $L$ has a diagram whose alternating tangle decomposition consists of an even number of alternating 4-end tangles connected together in a cycle where every edge is doubled, as in Figure \ref{fig:gt1}.
\end{theorem}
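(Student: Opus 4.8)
The plan is to begin with a diagram $D$ of $L$ that realizes the Turaev genus, so that $g_T(D) = g_T(L) = 1$; since $L$ is non-split I may assume $D$ is connected, and I will in fact take $D$ to be prime. By Equation \eqref{eq:tg}, the condition $g_T(D)=1$ is equivalent to $c(D) = |\sigma_A(D)| + |\sigma_B(D)|$. To the alternating tangle decomposition of $D$ I would associate a graph $G$ whose vertices are the alternating tangles and whose edges are the bundles of parallel non-alternating edges joining them; connectivity of $D$ makes $G$ connected. The strategy is to read the asserted cyclic-doubled structure off of $G$.

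The engine of the argument is a lemma expressing $g_T(D)$ through $G$. Each alternating tangle is, by definition, alternating, so the portion of the Turaev surface $\Sigma_D$ lying over it is planar; the surface $\Sigma_D$ is then assembled by gluing these planar pieces together along annular tubes, one tube per non-alternating bundle. A standard Euler characteristic computation shows that gluing the planar pieces of $p$ tangles along $q$ tubes in a connected pattern yields an orientable surface of genus $q - p + 1$, so I would prove that
\[ g_T(D) = |E(G)| - |V(G)| + 1 = b_1(G), \]
where $b_1(G)$ denotes the first Betti number. The hypothesis $g_T(D)=1$ then forces $G$ to be connected and unicyclic, that is, a single cycle with (possibly) trees attached.

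Next I would strip away the tree parts: a leaf of $G$ is an alternating tangle joined to the rest of $D$ by a single bundle, and I would argue that such a configuration either reveals a connected-sum decomposition, contradicting primality, or can be absorbed into its neighbor. After this reduction $G$ is a single cycle, so every tangle has degree two and meets exactly two bundles. Showing that each bundle consists of exactly two parallel strands---so that each tangle is a genuine $4$-end tangle and every edge of the cycle is doubled---is the delicate combinatorial step, and it is here that the maximality of the alternating tangles in the decomposition must be used.

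Finally, the cycle has even length. Drawn on $\Sigma_D$ the diagram $D$ is alternating, and the all-$A$ and all-$B$ capping disks furnish a checkerboard coloring of $\Sigma_D$ in which every edge separates an $A$-disk from a $B$-disk; following this two-coloring around the cyclic pattern of tangles forces the number of tangles, equivalently the number of connecting bundles, to be even. I expect the genus lemma $g_T(D)=b_1(G)$ to be the main obstacle: identifying the tubes with the non-alternating bundles and verifying that $\Sigma_D$ really is the glued-up surface requires a careful local analysis near each crossing and along each separating curve, as does the reduction to a diagram whose decomposition graph is a pure cycle and the proof that every bundle is doubled. Once this correspondence is pinned down, the parity follows comparatively routinely from the checkerboard structure on the Turaev surface.
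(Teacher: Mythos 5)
The paper does not prove this statement; it is quoted from Kim \cite{Kim_2018} and Armond--Lowrance \cite{AL_2017}, so your proposal must stand on its own, and its engine --- the lemma $g_T(D) = b_1(G)$ for the bundle graph $G$ --- is false. The Turaev surface is not assembled from the planar pieces over the tangles by one tube per bundle: it is assembled by one \emph{band} per non-alternating edge, and then capped by one disk per state circle. Running the Euler characteristic count correctly (using Equation \eqref{eq:tg} and the fact that an alternating tangle with $c_i$ crossings and $d_i$ ends contributes $c_i - d_i/2 + 1$ internal state circles in total) gives $2 - 2g_T(D) = p - n + A_g + B_g$, where $p$ is the number of tangles, $n$ the number of non-alternating edges, and $A_g, B_g$ count the components of $\sigma_A(D)$ and $\sigma_B(D)$ not contained in a single tangle. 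The term $A_g + B_g$ depends on the $+/-$ signs of the non-alternating edges and on their cyclic order around each tangle --- data your unsigned graph forgets. A concrete counterexample: the standard closed-braid diagram $D$ of $\sigma_1^2\sigma_2^2\sigma_1^2\sigma_2^2$. Its alternating tangle decomposition has exactly two tangles (the two $\sigma_1^2$ clasps merge through the alternating position-one strands, and the two $\sigma_2^2$ clasps merge through the position-three strands), joined by the four middle-strand non-alternating edges with signs $+,-,+,-$. Here $c(D)=8$, $|\sigma_A(D)|=3$ (the Seifert state of a positive braid), and a direct trace gives $|\sigma_B(D)|=5$, so $g_T(D)=1$; but $G$ has two vertices and four edges, so $b_1(G)=3$ (and if you instead merge edges into maximal parallel classes, note that consecutive middle edges cobound empty square faces, so all four merge and $b_1(G)=0$ --- under that same convention the four connecting edges of a semi-alternating diagram also merge into one class, making the theorem's own motivating example violate your formula). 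So $g_T(D)=1$ does \emph{not} force $G$ to be unicyclic, and the reduction to a cycle, the doubled-edge step, and the parity step all lose their foundation. The sign-blindness is visible even more simply: the closure of $\sigma_1^2\sigma_2^2$ (two tangles, two connecting edges of signs $+,-$) has $g_T(D)=0$, while the semi-alternating pattern with the same coarse combinatorics has $g_T(D)=1$.

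Two further gaps. First, the theorem concerns all non-split links, and composite Turaev genus one links exist (a connected sum of an alternating knot with a semi-alternating knot is non-alternating and has $g_T \leq 1$ by subadditivity of $g_T(D)$ under diagrammatic connected sum), so you may not simply ``take $D$ to be prime''; any tree-stripping must absorb such factors rather than appeal to primality. Second, the parity argument is vacuous as written: the $A$-disk/$B$-disk checkerboard coloring of $\Sigma_D$ exists for \emph{every} diagram and is compatible with a cycle of any length. The genuine mechanism --- and the way the cited proofs run --- is that the over/under types of the ends alternate around the boundary circle of each alternating tangle, so matching end types across each doubled edge forces the sign data to flip from one gap of the cycle to the next (visible in the $+/-$ labels of Figure \ref{fig:gt1}), and consistency around the cycle forces an even number of tangles. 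Kim and Armond--Lowrance work throughout with the \emph{signed} alternating decomposition graph (one edge per non-alternating edge, labelled $\pm$, together with its planar structure) and need nontrivial diagrammatic moves to reach the cyclic normal form; replacing your unsigned bundle graph by this signed ribbon object is not a detail to be pinned down later but is the actual content of the theorem.
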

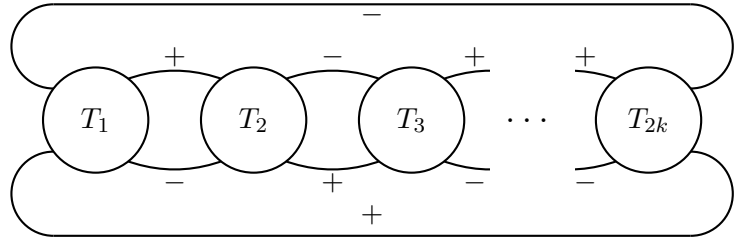
\begin{figure}[h]
\centering
    \[\begin{tikzpicture}[thick,scale = .7]
\draw [bend left] (0,.5) edge (3,.5);
\draw [bend right] (0,-.5) edge (3, -.5);
\draw [bend left] (3,.5) edge (6,.5);
\draw [bend right] (3,-.5) edge (6, -.5);
\draw [bend left] (6,.5) edge (9,.5);
\draw [bend right] (6,-.5) edge (9, -.5);
\draw [bend left] (7.5,.5) edge (10.5,.5);
\draw [bend right] (7.5,-.5) edge (10.5,-.5);

\fill[white] (7.5,1) rectangle (9.1,-1);
\draw (8.25,0) node{\Large{$\dots$}};

\draw (-.8,.6) arc (270:90:.8cm);
\draw (-.8,-.6) arc (90:270:.8cm);
\draw (11.3,.6) arc (-90:90:.8cm);
\draw (11.3,-.6) arc (90:-90:.8cm);
\draw (-.8,2.2) -- (11.3,2.2);
\draw (-.8,-2.2) -- (11.3,-2.2);

\fill[white] (0,0) circle (1cm);
\draw (0,0) node {$T_1$};
\draw (0,0) circle (1cm);
\fill[white] (3,0) circle (1cm);
\draw (3,0) node {$T_2$};
\draw (3,0) circle (1cm);
\fill[white] (6,0) circle (1cm);
\draw (6,0) node {$T_3$};
\draw (6,0) circle (1cm);
\fill[white] (10.5,0) circle (1cm);
\draw (10.5,0) node {$T_{2k}$};
\draw (10.5,0) circle (1cm);

\draw (1.5,1.2) node{$+$};
\draw (1.5,-1.2) node{$-$};
\draw (4.5,1.2) node{$-$};
\draw (4.5,-1.2) node{$+$};
\draw (5.25,2) node{$-$};
\draw (5.25,-1.8)node{$+$};
\draw (7.2,1.2) node{$+$};
\draw (7.2,-1.2) node{$-$};
\draw (9.3,1.2) node{$+$};
\draw (9.3,-1.2)node{$-$};

\end{tikzpicture}\]
    \caption{Every non-split Turaev genus one link has a diagram in the above format. Each tangle $T_i$ is alternating. An edge labeled $+$ corresponds to a non-alternating edge of $D$ whose endpoints are both over-crossings. Similarly, an edge labeled $-$ corresponds to a non-alternating edge of $D$ whose endpoints are both under-crossings.}
    \label{fig:gt1}
\end{figure}
The classification in Theorem \ref{thm:tg1} leads to results about coefficients of the Jones polynomial of a Turaev genus one link \cite{DL_2018, LS_2017}, results about the extremal and near extremal Khovanov homology of Turaev genus one links \cite{DL_2020,BDLMV_2024}, and a proof that no knot or link with Turaev genus one has trivial Jones polynomial \cite{LS_2017}.

We are now ready to give our second proof of Theorem \ref{thm:main}. Since the forward implication was shown in the first proof, we omit it here.
\begin{proof}[Proof of Theorem \ref{thm:main}]
Suppose that $\Span V_L(t) = c(L) - 1$. Since $\Span V_L(t) < c(L)$, it follows that $L$ is non-alternating. Let $D$ be a minimum crossing diagram of $L$. Since $L$ is non-alternating, Remark \ref{rem:extreme} implies that $g_T(D)=1$ and the extreme coefficients $a_m$ and $a_M$ of $\langle D \rangle$ are non-zero.

Consider the following two mutually exclusive cases.
\begin{enumerate}
    \item The decorated all-$A$ state $\widetilde{\sigma_A}(D)$ has no $A$-chords, and the decorated all-$B$ state $\widetilde{\sigma_B}(D)$ has no $B$-chords.
    \item At least one of $\widetilde{\sigma_A}(D)$ and $\widetilde{\sigma_B}(D)$ has at least one $A$- or $B$-chord.
\end{enumerate}
In the first case, the diagram $D$ is adequate, and the proof is complete. In the second case, the diagram $D$ is not adequate, and so we seek a contradiction.

Without loss of generality, suppose that the decorated all-$A$ state $\widetilde{\sigma_A}(D)$ has at least one $A$-chord. Since $a_M$ is nonzero, Lemma \ref{lem:extremecoeff} implies that $\widetilde{\sigma_A}(D)$ has a pair of $A$-chords whose endpoints alternate as one travels around that component of $\sigma_A(D)$. We call this pair of $A$-chords \textit{interleaved}.

Theorem \ref{thm:tg1} implies that $D$ has the form as in Figure \ref{fig:gt1}. The components of the all-$A$ state $\sigma_A(D)$ are of two types. A component is either completely contained inside an alternating tangle $T_i$, or it intersects more than one of the $T_i$'s. If a component $\gamma$ of $\sigma_A(D)$ is completely contained inside $T_i$ for some $i$, then it traces out a face of the tangle diagram because $T_i$ is alternating. Since the $T_i$'s may be taken to have no nugatory crossings, that component $\gamma$ has no $A$-chords, that is, all the $A$-traces meeting $\gamma$ go between two distinct components of $\sigma_A(D)$. 

Figure \ref{fig:Acomps} depicts the components of $\sigma_A(D)$ that intersect more than one $T_i$. If there are more than two alternating tangles in the alternating decomposition of $D$, then it is impossible for any component of $\sigma_A(D)$ to have interleaved $A$-chords. Therefore, the number of alternating tangles in $D$ is two.

Since the diagram $D$ has two alternating tangles and a pair of interleaving $A$-chords, it has the form of the first diagram in Figure \ref{fig:2tang}, where each $R_i$ is an alternating tangle. Figure \ref{fig:2tang} shows that after two flypes and an isotopy, the diagram $D$ is transformed into an alternating diagram $D'$ of $L$. This contradicts the fact that $L$ is non-alternating. Hence case (2) from above cannot occur, and thus $L$ is adequate.
\end{proof}
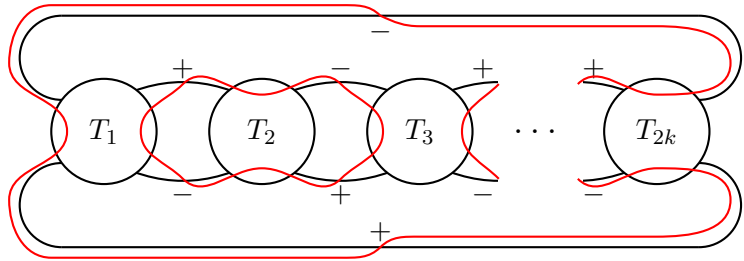
\begin{figure}[h]
\centering
    \[\begin{tikzpicture}[thick,scale = .7]
\draw [bend left] (0,.5) edge (3,.5);
\draw [bend right] (0,-.5) edge (3, -.5);
\draw [bend left] (3,.5) edge (6,.5);
\draw [bend right] (3,-.5) edge (6, -.5);
\draw [bend left] (6,.5) edge (9,.5);
\draw [bend right] (6,-.5) edge (9, -.5);
\draw [bend left] (7.5,.5) edge (10.5,.5);
\draw [bend right] (7.5,-.5) edge (10.5,-.5);

\fill[white] (7.5,1) rectangle (9.1,-1);
\draw (8.25,0) node{\Large{$\dots$}};

\draw (-.8,.6) arc (270:90:.8cm);
\draw (-.8,-.6) arc (90:270:.8cm);
\draw (11.3,.6) arc (-90:90:.8cm);
\draw (11.3,-.6) arc (90:-90:.8cm);
\draw (-.8,2.2) -- (11.3,2.2);
\draw (-.8,-2.2) -- (11.3,-2.2);

\fill[white] (0,0) circle (1cm);
\draw (0,0) node {$T_1$};
\draw (0,0) circle (1cm);
\fill[white] (3,0) circle (1cm);
\draw (3,0) node {$T_2$};
\draw (3,0) circle (1cm);
\fill[white] (6,0) circle (1cm);
\draw (6,0) node {$T_3$};
\draw (6,0) circle (1cm);
\fill[white] (10.5,0) circle (1cm);
\draw (10.5,0) node {$T_{2k}$};
\draw (10.5,0) circle (1cm);

\draw (1.5,1.2) node{$+$};
\draw (1.5,-1.2) node{$-$};
\draw (4.5,1.2) node{$-$};
\draw (4.5,-1.2) node{$+$};
\draw (5.25,1.9) node{$-$};
\draw (5.25,-1.9)node{$+$};
\draw (7.2,1.2) node{$+$};
\draw (7.2,-1.2) node{$-$};
\draw (9.3,1.2) node{$+$};
\draw (9.3,-1.2)node{$-$};

\draw[thick, red] (3,.7) to [out = 180, in  = 45]
(1.5,.9) to [out = 225, in = 90]
(.7,0) to [out = 270, in = 135]
(1.5,-.9) to [out = 315, in = 180]
(3,-.7) to [out = 0, in = 225]
(4.5,-.9) to [out = 45, in = 270]
(5.3,0) to [out = 90, in =315 ]
(4.5,.9) to [out = 135, in= 0]
(3,.7);

\draw[thick,red] (7.5,.9) to [out = 225, in = 90]
(6.8,0) to [out = 270, in =135]
(7.5,-.9);
\draw[thick,red] (9,.9) to [out =45, in = 180]
(10.5,.7) to [out = 0, in = 270]
(11.9, 1.3) to [out = 90, in = 0]
(10.5,2) to [out = 180, in =0]
(6,2) to [out = 180, in = 335]
(5.25, 2.2) to [out = 135, in = 0]
(4.5,2.4) to [out = 180, in = 0]
(-1,2.4) to [out = 180, in = 90]
(-1.8,1.3) to [out =270, in = 90]
(-.7,0) to [out = 270, in = 90]
(-1.8,-1.3) to [out = 270, in = 180]
(-1,-2.4) to [out =0, in = 180]
(4.5,-2.4) to [out = 0, in = 225]
(5.25,-2.2) to [out = 45, in = 180]
(6,-2) to [out = 0, in = 180]
(10.5,-2) to [out = 0, in = 270]
(11.9,-1.3) to [out = 90, in = 0]
(10.5,-.7) to [out = 180, in = 315]
(9,-.9);

\end{tikzpicture}\]
    \caption{Components of the all-$A$ state that are not completely contained in an alternating tangle $T_i$ are drawn in red.}
    \label{fig:Acomps}
\end{figure}

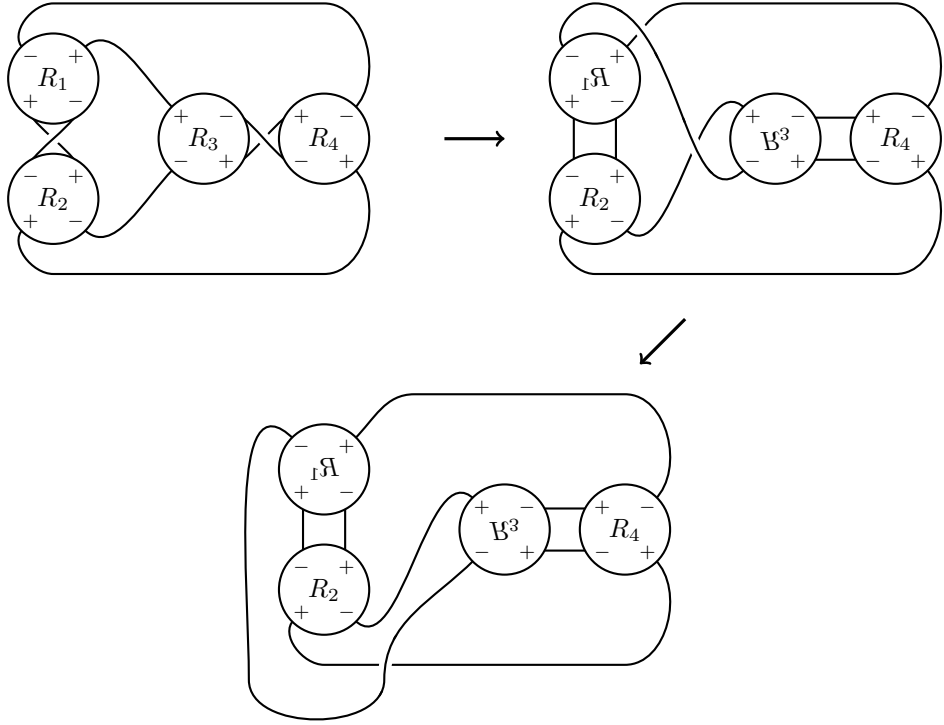
\begin{figure}[h]
\centering
    \[\begin{tikzpicture}[scale=.4, thick]

\begin{knot}[	
	%draft mode = crossings,
	consider self intersections,
 	clip width = 6,
 	ignore endpoint intersections = true,
	end tolerance = 2pt
	]
	\flipcrossings{1}
	\strand[thick]  (-1.2,2) to [out = 270, in = 90] (1.2,-2);
	\strand[thick] (-1.2,-2) to [out = 90, in = 270] (1.2,2);
	\strand[thick] (5,1.2) to [out = 0, in = 180] (9,-1.2);
	\strand[thick] (5,-1.2) to [out = 0, in = 180] (9,1.2);
\end{knot}
\draw (0,2) to [out = 45, in = 225]
(1,3) to [out = 45, in = 135]
(4,1) to [out = 315, in = 135]
(5,0);
\draw (0,-2) to [out = 315, in = 135]
(1,-3) to [out = 315, in = 225]
(4,-1) to [out = 45, in = 225]
(5,0);
\draw (0,2) to [out = 135, in = 315] 
(-1,3) to [out = 135, in = 180]
(0,4.5) to [out = 0, in = 180]
(9,4.5) to [out = 0, in = 45]
(10,1) to [out = 225, in = 45]
(9,0);
\draw (0,-2) to [out = 225, in = 45]
(-1,-3) to [out = 225, in = 180]
(0,-4.5) to [out = 0, in = 180]
(9,-4.5) to [out = 0, in = 315]
(10,-1) to [out = 135, in = 315]
(9,0);

\fill[white] (0,2) circle (1.5cm);
\fill[white] (0,-2) circle (1.5cm);

\fill[white] (5,0) circle (1.5cm);
\fill[white](9,0) circle (1.5cm);

\draw (0,2) circle (1.5cm);
\draw (0,-2) circle (1.5cm);

\draw (5,0) circle (1.5cm);
\draw (9,0) circle (1.5cm);

\draw (0,2) node{\small{$R_1$}};
\draw (0,-2) node{\small{$R_2$}};
\draw (5,0) node{\small{$R_3$}};
\draw (9,0) node{\small{$R_4$}};
\draw (.75,2.75)  node{\tiny{$+$}};
\draw (-.75,2.75) node{\tiny{$-$}};
\draw (.75, 1.25) node{\tiny{$-$}};
\draw (-.75, 1.25) node{\tiny{$+$}};

\draw (.75,-2.75)  node{\tiny{$-$}};
\draw (-.75,-2.75) node{\tiny{$+$}};
\draw (.75, -1.25) node{\tiny{$+$}};
\draw (-.75, -1.25) node{\tiny{$-$}};

\draw (4.25,.75) node{\tiny{$+$}};
\draw (5.75,.75) node{\tiny{$-$}};
\draw (5.75,-.75) node{\tiny{$+$}};
\draw (4.25,-.75) node{\tiny{$-$}};

\draw (8.25,.75) node{\tiny{$+$}};
\draw (9.75,.75) node{\tiny{$-$}};
\draw (9.75,-.75) node{\tiny{$+$}};
\draw (8.25,-.75) node{\tiny{$-$}};

%%%%%%%%%%%%%%%%%%%%

\begin{scope}[xshift = 18 cm]
\begin{knot}[	
	%draft mode = crossings,
	consider self intersections,
 	clip width = 6,
 	ignore endpoint intersections = true,
	end tolerance = 2pt
	]
	%\flipcrossings{}
	\strand[thick]  (0,2) to [out = 135, in = 315]
(-1,3) to [out = 135, in = 180]
(0,4.5) to [out = 0, in = 225]
(5,-1) to [out = 45, in = 225]
(6,0);
\strand[thick] (0,2) to [out = 45, in = 225]
(1,3) to [out = 45, in = 180]
(3,4.5) to [out = 0, in = 180]
(10,4.5) to [out = 0, in = 45]
(11,1) to [out = 225, in = 45]
(10,0);

\strand[thick] (0,-2) to [out = 315, in = 135]
(1,-3) to [out = 315, in = 135]
(5,1) to [out = 315, in = 135]
(6,0);
	
\end{knot}

\draw[thick] (-.7,2) -- (-.7,-2);
\draw[thick] (.7,2) -- (.7,-2);
\draw[thick] (6,.7) -- (10,.7);
\draw[thick] (6,-.7) -- (10,-.7);

\draw (0,-2) to [out = 225, in = 45]
(-1,-3) to [out = 225, in = 180]
(0,-4.5) to [out = 0, in = 180]
(10,-4.5) to [out = 0, in = 315]
(11,-1) to [out = 135, in = 315]
(10,0);

\fill[white] (0,2) circle (1.5cm);
\fill[white] (0,-2) circle (1.5cm);

\fill[white] (6,0) circle (1.5cm);
\fill[white](10,0) circle (1.5cm);

\draw (0,2) circle (1.5cm);
\draw (0,-2) circle (1.5cm);

\draw (6,0) circle (1.5cm);
\draw (10,0) circle (1.5cm);

\draw (0,2) node{\small{\reflectbox{$R_1$}}};
\draw (0,-2) node{\small{$R_2$}};
\draw (6,0) node{\small{\raisebox{\depth}{\scalebox{1}[-1]{$R_3$}}}};
\draw (10,0) node{\small{$R_4$}};
\draw (.75,2.75)  node{\tiny{$+$}};
\draw (-.75,2.75) node{\tiny{$-$}};
\draw (.75, 1.25) node{\tiny{$-$}};
\draw (-.75, 1.25) node{\tiny{$+$}};

\draw (.75,-2.75)  node{\tiny{$-$}};
\draw (-.75,-2.75) node{\tiny{$+$}};
\draw (.75, -1.25) node{\tiny{$+$}};
\draw (-.75, -1.25) node{\tiny{$-$}};

\draw (5.25,.75) node{\tiny{$+$}};
\draw (6.75,.75) node{\tiny{$-$}};
\draw (6.75,-.75) node{\tiny{$+$}};
\draw (5.25,-.75) node{\tiny{$-$}};

\draw (9.25,.75) node{\tiny{$+$}};
\draw (10.75,.75) node{\tiny{$-$}};
\draw (10.75,-.75) node{\tiny{$+$}};
\draw (9.25,-.75) node{\tiny{$-$}};

\end{scope}

%%%%%%%%%%%%%%%%%%%%

\begin{scope}[xshift = 9 cm, yshift = -13 cm]
\begin{knot}[	
	%draft mode = crossings,
	consider self intersections,
 	clip width = 6,
 	ignore endpoint intersections = true,
	end tolerance = 2pt
	]
	%\flipcrossings{}
	\strand[thick]  (0,2) to [out = 135, in = 315]
(-1,3) to [out = 135, in = 90]
(-2.5, -5) to [out =270, in = 270]
(2,-5) to [out = 90, in = 225]
(5,-1) to [out = 45, in = 225]
(6,0);
\strand[thick] (0,2) to [out = 45, in = 225]
(1,3) to [out = 45, in = 180]
(3,4.5) to [out = 0, in = 180]
(10,4.5) to [out = 0, in = 45]
(11,1) to [out = 225, in = 45]
(10,0);

\strand[thick] (0,-2) to [out = 315, in = 135]
(1,-3) to [out = 315, in = 135]
(5,1) to [out = 315, in = 135]
(6,0);

\strand[thick] (0,-2) to [out = 225, in = 45]
(-1,-3) to [out = 225, in = 180]
(0,-4.5) to [out = 0, in = 180]
(10,-4.5) to [out = 0, in = 315]
(11,-1) to [out = 135, in = 315]
(10,0);
	
\end{knot}

\draw[thick] (-.7,2) -- (-.7,-2);
\draw[thick] (.7,2) -- (.7,-2);
\draw[thick] (6,.7) -- (10,.7);
\draw[thick] (6,-.7) -- (10,-.7);

\fill[white] (0,2) circle (1.5cm);
\fill[white] (0,-2) circle (1.5cm);

\fill[white] (6,0) circle (1.5cm);
\fill[white](10,0) circle (1.5cm);

\draw (0,2) circle (1.5cm);
\draw (0,-2) circle (1.5cm);

\draw (6,0) circle (1.5cm);
\draw (10,0) circle (1.5cm);

\draw (0,2) node{\small{\reflectbox{$R_1$}}};
\draw (0,-2) node{\small{$R_2$}};
\draw (6,0) node{\small{\raisebox{\depth}{\scalebox{1}[-1]{$R_3$}}}};
\draw (10,0) node{\small{$R_4$}};
\draw (.75,2.75)  node{\tiny{$+$}};
\draw (-.75,2.75) node{\tiny{$-$}};
\draw (.75, 1.25) node{\tiny{$-$}};
\draw (-.75, 1.25) node{\tiny{$+$}};

\draw (.75,-2.75)  node{\tiny{$-$}};
\draw (-.75,-2.75) node{\tiny{$+$}};
\draw (.75, -1.25) node{\tiny{$+$}};
\draw (-.75, -1.25) node{\tiny{$-$}};

\draw (5.25,.75) node{\tiny{$+$}};
\draw (6.75,.75) node{\tiny{$-$}};
\draw (6.75,-.75) node{\tiny{$+$}};
\draw (5.25,-.75) node{\tiny{$-$}};

\draw (9.25,.75) node{\tiny{$+$}};
\draw (10.75,.75) node{\tiny{$-$}};
\draw (10.75,-.75) node{\tiny{$+$}};
\draw (9.25,-.75) node{\tiny{$-$}};

\end{scope}

\draw[very thick, ->] (13,0) -- (15,0);
\draw[very thick, ->] ( 21,-6) -- (19.5,-7.5);

\end{tikzpicture}\]
    \caption{In each diagram, the tangles $R_i$ are alternating. A +/- label indicates the first crossing of the strand is an over/under-crossing respectively. The first diagram is a generic diagram $D$ where $g_T(D)=1$ and $\widetilde{\sigma_A}(D)$ has a pair of interleaving $A$-chords. Applying two flypes to the first diagram yields the second, and an additional isotopy yields the third.}
    \label{fig:2tang}
\end{figure}

\section{Consequences and applications}
\label{sec:cor}
In this section, we discuss some consequences and applications of Theorem \ref{thm:main}. The first proof of Theorem \ref{thm:main} gives expressions for $\min \deg_a \Lambda_D(a,z)$ and $\max \deg_a \Lambda_D(a,z)$.

\begin{corollary}
\label{cor:adeg}
If $D$ is an adequate link diagram with $g_T(D)=1$, then $\max\deg_{a}\Lambda_{D}(a,z) =  |\sigma_{B}(D)| - 1$, and $\min\deg_{a}\Lambda_{D}(a,z) =  -|\sigma_{A}(D)| + 1$.  Therefore,  we obtain $\Span_{a}(\Lambda_{D}(a,z)) = \Span_{a}(F_{L}(a,z)) =  c(L) - 2 = |\sigma_{A}(D)| + |\sigma_{B}(D)| -  2$.
\end{corollary}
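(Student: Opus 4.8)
The plan is to reuse the machinery from the first proof of Theorem~\ref{thm:main}, since an adequate diagram $D$ with $g_T(D)=1$ is precisely the situation analyzed there. First I would assemble the standing hypotheses: by Thistlethwaite~\cite{T_1988_2} the adequate diagram $D$ realizes the crossing number, so $c(D)=c(L)$; by Abe~\cite{Abe_2009} it realizes the Turaev genus, so $g_T(D)=g_T(L)=1$; and the forward direction of Theorem~\ref{thm:main} gives $\Span V_L(t)=c(L)-1$, which by Remark~\ref{rem:extreme} is equivalent to the extreme bracket coefficients $a_m$ and $a_M$ being nonzero. The identity $g_T(D)=1$ rewrites as $|\sigma_A(D)|+|\sigma_B(D)|=c(D)$, so all of the target equalities can be phrased in terms of $|\sigma_A(D)|$ and $|\sigma_B(D)|$ alone.

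Next I would extract the two \emph{corner} monomials of $\Lambda_D(a,z)$. Feeding the specialization~\eqref{eq:KauffToKauff} into the fact that $a_M,a_m\neq 0$, and combining with the support bound of Theorem~\ref{thm:KauffmanSupport}(1) exactly as in the first proof, produces nonzero monomials $u_{r_{\max},s_{\max}}$ and $u_{r_{\min},s_{\min}}$ on the two diagonal edges $r+s=c(D)$ and $-r+s=c(D)$ of the support, with $a$-exponents satisfying $r_{\max}-r_{\min}=c(D)-2$; reading them off gives $r_{\max}=|\sigma_B(D)|-1$ and $r_{\min}=-|\sigma_A(D)|+1$. The subtle point — and the step I expect to be the main obstacle — is that these monomials a priori only realize the extreme degrees of the \emph{bracket}, and need not realize the extreme $a$-degrees of $\Lambda_D$ itself; there could in principle be monomials of $\Lambda_D$ with larger or smaller $a$-exponent whose contributions to the bracket happen to cancel. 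This is where Proposition~\ref{prop:aspan} is indispensable: it bounds $\Span_a\Lambda_D(a,z)\le c(D)-2$, whereas the two corner monomials force $\max\deg_a\Lambda_D-\min\deg_a\Lambda_D\ge r_{\max}-r_{\min}=c(D)-2$. The two inequalities squeeze together, so both are equalities, and consequently $\max\deg_a\Lambda_D=r_{\max}=|\sigma_B(D)|-1$ and $\min\deg_a\Lambda_D=r_{\min}=-|\sigma_A(D)|+1$. This establishes the first two assertions of the corollary.

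Finally I would read off the span and transfer it to $F_L$. Subtracting the two extreme exponents gives $\Span_a\Lambda_D(a,z)=\bigl(|\sigma_B(D)|-1\bigr)-\bigl(-|\sigma_A(D)|+1\bigr)=|\sigma_A(D)|+|\sigma_B(D)|-2$, which equals $c(L)-2$ by the identity $|\sigma_A(D)|+|\sigma_B(D)|=c(D)=c(L)$ noted above. Since $F_L(a,z)=a^{-w(D)}\Lambda_D(a,z)$ merely shifts every $a$-exponent by the constant $-w(D)$, it preserves the $a$-span, so $\Span_a F_L(a,z)=\Span_a\Lambda_D(a,z)=c(L)-2$. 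The only genuinely new content beyond the first proof of Theorem~\ref{thm:main} is the squeeze in the second paragraph; the remainder is bookkeeping with quantities already computed there.
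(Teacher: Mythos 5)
Your overall route is the paper's own: the paper proves Corollary \ref{cor:adeg} simply by pointing back to the first proof of Theorem \ref{thm:main}, and the squeeze you single out as ``the main obstacle'' is already present there --- the step ``Since Proposition \ref{prop:aspan} implies that $r_{\max}-r_{\min}\leq c(D)-2$, it follows that $r_{\max}-r_{\min}=c(D)-2$'' is exactly the assertion that the corner monomials of the bracket realize the extreme $a$-degrees of $\Lambda_D$. Your explicit attention to possible cancellation under the specialization \eqref{eq:KauffToKauff} is sound, but it is the same argument, not new content.

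There is, however, a genuine error in your ``reading off'' step. Combining \eqref{ineq:max1} with \eqref{ineq:max2} gives $2r_{\max}\geq M-c(D)=2|\sigma_A(D)|-2$, and combining \eqref{ineq:min1} with \eqref{ineq:min2} gives $2r_{\min}\leq m+c(D)=2-2|\sigma_B(D)|$; after the squeeze these force $r_{\max}=|\sigma_A(D)|-1$ and $r_{\min}=1-|\sigma_B(D)|$, which is precisely what the first proof of Theorem \ref{thm:main} records. You instead assert $r_{\max}=|\sigma_B(D)|-1$ and $r_{\min}=-|\sigma_A(D)|+1$. Since $|\sigma_A(D)|\neq|\sigma_B(D)|$ in general, this is not a relabeling: the values you state do not follow from the inequalities you cite. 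They do match the printed statement of Corollary \ref{cor:adeg}, but that statement itself interchanges $A$ and $B$ relative to the paper's own computation in Section \ref{sec:proof1}: under the paper's conventions, where a positive kink contributes the factor $a=-A^3$ so that \eqref{eq:KauffToKauff} holds, the maximum bracket degree $M=c(D)+2|\sigma_A(D)|-2$ is governed by the all-$A$ state, and the $a$-exponent of the monomial feeding that degree --- hence $\max\deg_a\Lambda_D$ --- equals $|\sigma_A(D)|-1$. (One can check this on the standard $3$-crossing diagram of the trefoil, where $|\sigma_A(D)|=2$, $|\sigma_B(D)|=3$, and $\Lambda_D(a,z)=(a+a^{-1})z^2+(1+a^{-2})z-2a-a^{-1}$, so $\max\deg_a\Lambda_D=1=|\sigma_A(D)|-1$.) A correct write-up must either derive the degrees with the $A$/$B$ roles as in the theorem's proof and flag the discrepancy with the corollary as printed, or justify the swap by a change of convention --- which the inequalities you invoke cannot supply. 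The final assertion $\Span_a\Lambda_D(a,z)=\Span_a F_L(a,z)=c(L)-2=|\sigma_A(D)|+|\sigma_B(D)|-2$ is insensitive to the swap, and your proof of it, including the observation that multiplication by $a^{-w(D)}$ preserves the $a$-span, is complete and agrees with the paper.
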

Yokota \cite{Yokota_1995} proved that $\max\deg_{a}\Lambda_D(a,z) = |\sigma_B(D)|-1$ and $\min\deg_a \Lambda_D(a,z)=-|\sigma_A(D)|+1$ for alternating links. Corollary \ref{cor:adeg} can be seen as an extension of Yokota's result to adequate Turaev genus one links.

In the next corollary, we conclude that links satisfying $\Span V_L(t) = c(L) - 1$ have various properties. Before stating the corollary, we define or give references describing these properties. Ozsv\'ath and Szab\'o \cite{OS_2005} defined \textit{quasi-alternating links} as the smallest set $Q$ of links containing the unknot and satisfying the condition that if $\tikz[baseline=.6ex, scale = .4]{
\draw[rounded corners = 1.5mm] (0,0) -- (.45,.5) -- (0,1);
\draw[rounded corners = 1.5mm] (1,0) -- (.55,.5) -- (1,1);
}$ and $ \tikz[baseline=.6ex, scale = .4]{
\draw[rounded corners = 1.5mm] (0,0) -- (.5,.45) -- (1,0);
\draw[rounded corners = 1.5mm] (0,1) -- (.5,.55) -- (1,1);
}$ are in $Q$ and $\det \tikz[baseline=.6ex, scale = .4]{
\draw[rounded corners = 1.5mm] (0,0) -- (.45,.5) -- (0,1);
\draw[rounded corners = 1.5mm] (1,0) -- (.55,.5) -- (1,1);
} + \det \tikz[baseline=.6ex, scale = .4]{
\draw[rounded corners = 1.5mm] (0,0) -- (.5,.45) -- (1,0);
\draw[rounded corners = 1.5mm] (0,1) -- (.5,.55) -- (1,1);
} = \det \tikz[baseline=.6ex, scale = .4]{
\draw (0,0) -- (1,1);
\draw (1,0) -- (.7,.3);
\draw (.3,.7) -- (0,1);
}$, then $\tikz[baseline=.6ex, scale = .4]{
\draw (0,0) -- (1,1);
\draw (1,0) -- (.7,.3);
\draw (.3,.7) -- (0,1);
}$ is also in $Q$. The arc index of a link was defined in Section \ref{sec:proof1}. The Jones diameter $jd_L$ of a link $L$ is a measure of the span of the colored Jones polynomial of $L$; see \cite{KL_2023} for a precise definition. A $4$-end tangle is \textit{strongly alternating} if both its natural closures are reduced alternating diagrams. Lickorish and Thistlethwaite \cite{LT_1988} define a \textit{semi-alternating link} to be a link with a diagram obtained by summing two strongly alternating tangles to obtain a non-alternating tangle and taking the closure as in Figure \ref{fig:semi}. A semi-alternating link is adequate and has Turaev genus one.

\begin{figure}[h]
\[\begin{tikzpicture}[thick]

\begin{scope}[scale = .8]
\draw (2^-.5,2^-.5) to [out = 45, in = 0] (0,1.5) to [out = 180, in = 135] (-2^-.5,2^-.5);
\draw (2^-.5,-2^-.5) to [out = 315, in = 0] (0, -1.5) to [out =180, in = 225] (-2^-.5,-2^-.5);

\fill[white] (0,0) circle (1cm);
\draw (0,0) circle (1cm);
\draw (0,0) node{$T$};

\begin{scope}[xshift = 5cm]

\draw (2^-.5,2^-.5) to [out = 45, in = 90] (1.5,0) to [out = 270, in = 315] (2^-.5,-2^-.5);
\draw (-2^-.5,2^-.5) to [out = 135, in = 90] (-1.5,0) to [out =270, in = 225] (-2^-.5,-2^-.5);

\fill[white] (0,0) circle (1cm);
\draw (0,0) circle (1cm);
\draw (0,0) node{$T$};

\end{scope}
\end{scope}

\begin{scope}[scale = .6, yshift = -8.5cm, xshift = -.4cm]

\begin{knot}[	
	%draft mode = crossings,
	consider self intersections,
 	clip width = 3,
 	ignore endpoint intersections = true,
	end tolerance = 2pt
	]
	\flipcrossings{2,9,5,3,7}
	\strand[rounded corners = 2mm]
	(4,0) -- (0,0) -- (0,1) -- (1,2) -- (1,3) -- (0,4) -- (0,5) -- (8,5) -- (8,4) -- (7,3) -- (6,4) -- (5,3) -- (5,2) -- (6,1) -- (7,1) -- (8,2) -- (8,3) -- (7,4) -- (6,3) -- (5,4) -- (3,4) -- (2,3) -- (3,2) -- (2,1) -- (1,1) -- (0,2) -- (0,3) -- (1,4) -- (2,4) -- (3,3) -- (2,2) -- (3,1) -- (5,1) -- (6,2) -- (7,2) -- (8,1) -- (8,0) -- (4,0);

\end{knot}

\draw[thick, dashed, green!70!black] (1.5,2.5) circle (2cm);
\draw[thick, dashed, green!70!black] (6.5,2.5) circle (2cm);

\end{scope}

\end{tikzpicture}\]
\label{fig:semi}
\caption{Top: The two natural closures of the tangle $T$. If both are reduced alternating diagrams, then $T$ is strongly alternating. Bottom: Two strongly alternating tangles combined together to give a semi-alternating link.}
\end{figure}
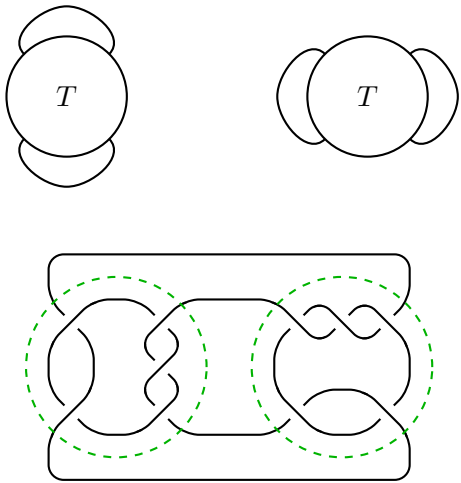

\begin{corollary}
\label{cor:AdeqProperties}
Let $L$ be a non-split link such that $\Span V_{L}(t) = c(L) - 1$. Then $L$ is not quasi-alternating, has arc index $\alpha(L) =c(L)$, has Jones diameter $jd_{L} = 2c(L)$, and the leading and trailing coefficients of $V_{L}(t)$ have absolute value one. In particular, any semi-alternating link satisfies all of above properties.
\end{corollary}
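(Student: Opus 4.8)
The plan is to run everything through Theorem~\ref{thm:main}, which says the hypothesis $\Span V_L(t)=c(L)-1$ is equivalent to $L$ being adequate with $g_T(L)=1$. So I would begin by fixing an adequate diagram $D$ of $L$; Thistlethwaite~\cite{T_1988_2} and Abe~\cite{Abe_2009} then let me assume $c(D)=c(L)$ and $g_T(D)=1$, and Corollary~\ref{cor:adeg} gives $\Span_a\Lambda_D(a,z)=c(L)-2$. Each of the four asserted properties then follows by feeding this data into an existing characterization; the genuinely new point is non-quasi-alternation, which I address last.

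The statement about extreme coefficients is the quickest: as recalled in the first proof of Theorem~\ref{thm:main}, Lickorish and Thistlethwaite~\cite{LT_1988} show the extreme coefficients $a_m,a_M$ of $\langle D\rangle$ have absolute value one when $D$ is adequate, and since $V_L(t)=(-A^3)^{-w(D)}\langle D\rangle|_{A=t^{-1/4}}$ differs from $\langle D\rangle$ only by a unit and a change of variable, its leading and trailing coefficients equal $\pm a_M$ and $\pm a_m$. For the arc index, the Morton--Beltrami inequality~\cite{MB_1998} used in Proposition~\ref{prop:aspan} gives $\alpha(L)\ge \Span_a\Lambda_D(a,z)+2=c(L)$, while the Jin--Park bound~\cite{JP_2010} gives $\alpha(L)\le c(L)$ because $L$ is non-alternating; together these force $\alpha(L)=c(L)$. (For composite $L$, I would split into prime summands and invoke additivity of $\alpha$, of $c$, and of $\Span_a\Lambda_D$, noting the single $g_T=1$ summand contributes via Jin--Park and the alternating summands via $\alpha=c+2$, so the total is again $c(L)$.)

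The Jones diameter is immediate from adequacy: Kalfagianni~\cite{Kalfagianni_2018} (see also Kalfagianni--Lee~\cite{KL_2023}) proves $jd_L\le 2c(L)$ for all $L$ with equality exactly when $L$ is adequate, and $L$ is adequate. The hard part will be showing $L$ is not quasi-alternating, since this cannot come from the span condition alone: there are non-alternating (hence positive Turaev genus) quasi-alternating links, so adequacy must be used essentially. My approach is the thin/thick dichotomy in Khovanov homology. Quasi-alternating links are homologically thin, whereas the computations of the extremal and near-extremal Khovanov homology of Turaev genus one links in~\cite{DL_2020,BDLMV_2024} show that an adequate link with $g_T(L)=1$ is homologically thick; a thick link cannot be quasi-alternating, so $L$ is not quasi-alternating. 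Making this step airtight---pinning down exactly which thickness statement the Turaev genus one Khovanov computations yield for adequate links, and confirming it holds for links and not merely knots---is where I expect the real work to lie.

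Finally, the ``in particular'' clause needs nothing new: a semi-alternating link is adequate and has Turaev genus one (as recorded just before the corollary), so the forward direction of Theorem~\ref{thm:main} gives $\Span V_L(t)=c(L)-1$, and all four properties then apply verbatim.
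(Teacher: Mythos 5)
Your treatment of three of the four properties and of the ``in particular'' clause matches the paper's proof step for step: Theorem \ref{thm:main} plus Corollary \ref{cor:adeg}, then Morton--Beltrami \cite{MB_1998} and Jin--Park \cite{JP_2010} to pin $\alpha(L)=c(L)$, Kalfagianni--Lee \cite{KL_2023} for $jd_L=2c(L)$, and Lickorish--Thistlethwaite \cite{LT_1988} for the extreme coefficients. Your extra care with composite links in the arc index step is reasonable (the paper's proof quietly cites Jin--Park, whose bound Proposition \ref{prop:aspan} states for prime links), though note that the arc index is not additive under connected sum but satisfies $\alpha(L_1 \# L_2)=\alpha(L_1)+\alpha(L_2)-2$; your parenthetical computation uses the correct formula in effect, so this is only a wording slip.

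The genuine gap is in the quasi-alternating step, and you flagged it yourself. You propose to extract thickness of Khovanov homology from the extremal and near-extremal computations for Turaev genus one links in \cite{DL_2020,BDLMV_2024}, but those references do not hand you the statement ``adequate with $g_T(L)=1$ implies homologically thick,'' and the detour through the Turaev genus one structure is unnecessary. The paper instead invokes a direct and much older result: Khovanov \cite{Kh_2003} proved that every \emph{non-alternating adequate} link has thick Khovanov homology, and $L$ is non-alternating here simply because $\Span V_L(t)=c(L)-1<c(L)$. Combined with the thinness of quasi-alternating links \cite{MO_2007}, which you cite correctly, this closes the argument in two lines. So your thin/thick strategy is exactly the paper's; what is missing is the correct source of thickness, which comes from adequacy alone (plus non-alternation), not from the Turaev genus one classification or its Khovanov computations.
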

\begin{proof}
Since $\Span V_L(t) < c(L)$, it follows that $L$ is non-alternating. By Theorem \ref{thm:main}, $L$ is adequate and has Turaev genus one. Khovanov \cite{Kh_2003} proved that non-alternating adequate links have thick Khovanov homology. Since quasi-alternating links have thin Khovanov homology \cite{MO_2007}, it follows that $L$ is not quasi-alternating. Corollary \ref{cor:adeg} implies that $\Span_a \Lambda_D(a,z) = c(D)-2$. Morton and Beltrami \cite{MB_1998} proved that $\Span_a \Lambda_D(a,z) + 2 \leq \alpha(L)$, and Jin and Park \cite{JP_2010} proved that $\alpha(L)\leq c(L)$. Therefore $\alpha(L) = c(L)$. Kalfagianni and Lee \cite{KL_2023} proved that $jd_{L} = 2c(L)$ when $L$ is adequate. Since $L$ is adequate, the leading and trailing coefficients of $V_L(t)$ have absolute value one by Lickorish and Thistlethwaite \cite{LT_1988}.
\end{proof}
Lickorish and Thistlethwaite \cite{LT_1988} proved that the coefficients of the Jones polynomial of a semi-alternating link do not alternate in sign. Since the Khovanov homology of a quasi-alternating link is thin, the coefficients of its Jones polynomial alternate in sign. This gives another proof that semi-alternating links are not quasi-alternating. In forthcoming work, del Valle V\'ilchez and Lowrance \cite{DL_2025} show that the arc index of any adequate link is given by $\alpha(L) = c(L) - 2g_T(L) + 2 = |\sigma_A(D)| + |\sigma_B(D)|$, where $D$ is an adequate diagram of $L$.

The next corollary is similar in spirit to \cite[Corollary\,2.1]{T_1988_3}. 
\begin{corollary}
Let $D$ be a prime link diagram such that $\Span \langle D \rangle= 4(c(D)-1)$. Then either $D$ is an adequate diagram with $g_T(D)=1$ or $D$ represents an alternating link with crossing number $c(D)-1$. Therefore, the leading and trailing coefficients of $\langle D \rangle$ are $\pm 1$.
\end{corollary}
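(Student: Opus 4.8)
The plan is to transfer everything to the Jones polynomial and invoke Theorem \ref{thm:main}, splitting on whether the underlying link $L$ is alternating. Writing $L$ for the link represented by $D$, the first step is to record, using $4\Span V_L(t) = \Span\langle D\rangle$, that
\[\Span V_L(t) = \tfrac14\Span\langle D\rangle = c(D)-1.\]
Since $D$ is prime it represents a non-split link, so Inequality \eqref{eq:span} and Theorem \ref{thm:main} are both available. It is also worth noting at the outset that Remark \ref{rem:extreme} gives $4(c(D)-1)=\Span\langle D\rangle \le 4c(D)-4g_T(D)$, hence $g_T(D)\le 1$.

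\emph{Case 1: $L$ is non-alternating.} Then $g_T(L)\ge 1$, and combining $\Span V_L(t)\le c(L)-g_T(L)$ with the bound $c(L)\le c(D)$ yields the chain
\[c(D)-1 = \Span V_L(t) \le c(L)-g_T(L) \le c(L)-1 \le c(D)-1,\]
so every inequality is an equality. This forces $g_T(L)=1$, $c(L)=c(D)$, and $\Span V_L(t)=c(L)-1$. Theorem \ref{thm:main} then shows that $L$ is adequate with Turaev genus one. Because $c(D)=c(L)$, the diagram $D$ is a minimum crossing diagram of the adequate link $L$, so by Thistlethwaite's theorem that every minimum crossing diagram of an adequate link is adequate, $D$ is itself adequate; Abe's theorem then gives $g_T(D)=g_T(L)=1$. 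This is the first alternative of the conclusion.

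\emph{Case 2: $L$ is alternating.} Then $\Span V_L(t)=c(L)$, so $c(L)=c(D)-1$, i.e. $D$ represents an alternating link of crossing number $c(D)-1$, which is the second alternative. For the final sentence I would treat the two cases separately. In Case 1, $D$ is adequate, so the result of Lickorish and Thistlethwaite gives that the extreme bracket coefficients $a_M$ and $a_m$—which are precisely the leading and trailing coefficients—have absolute value one. In Case 2, the leading and trailing coefficients of the Jones polynomial of an alternating link are $\pm 1$; since $\Span\langle D\rangle = 4\Span V_L(t)$, the top and bottom degree terms of $\langle D\rangle$ map bijectively, under the unit $(-A^3)^{-w(D)}$ and the substitution $A=t^{-1/4}$, to the extreme terms of $V_L(t)$ with coefficients preserved up to sign, so the leading and trailing coefficients of $\langle D\rangle$ are again $\pm 1$.

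The step I expect to require the most care is this uniform treatment of the leading and trailing coefficients. In Case 2 the diagram $D$ need not be adequate (it may have $g_T(D)\in\{0,1\}$, and when $g_T(D)=0$ the maximal and minimal degrees of $\langle D\rangle$ are strictly interior to $[m(D),M(D)]$), so one cannot quote Lickorish–Thistlethwaite directly and must instead transport the statement through the normalization to the ambient isotopy invariant $V_L(t)$. Verifying that no cancellation occurs at the extreme degrees—so that $|a_M|=|a_m|=1$ genuinely follows from the corresponding classical fact for alternating Jones polynomials—is the one point that must be checked rather than asserted.
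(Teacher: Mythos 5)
Your proposal is correct and takes essentially the same route as the paper, whose one-line proof likewise combines Theorem \ref{thm:main} with the chain $4(c(D)-1)=\Span \langle D \rangle \leq 4c(L) \leq 4c(D)$ and the resulting dichotomy $c(L)\in\{c(D)-1,\,c(D)\}$; your write-up simply makes explicit the details the paper leaves implicit (Thistlethwaite and Abe to upgrade the conclusion from $L$ to the diagram $D$, and Lickorish--Thistlethwaite for the extreme coefficients). The one point you flag as needing care is actually vacuous: $\langle D \rangle = (-A^3)^{w(D)}\,V_L(A^{-4})$ exactly, a unit monomial times the Jones polynomial, so the extreme coefficients of $\langle D \rangle$ agree with those of $V_L(t)$ up to sign and no cancellation at the extreme degrees can occur.
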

\begin{proof}
The corollary follows immediately from Theorem \ref{thm:main} and the fact that $4(c(D)-1) = \Span \langle D \rangle \leq 4c(L)$. 
\end{proof}

\begin{corollary}
Let $L$ be a quasi-alternating link. Either $c(L) = \Span V_{L}(t)$ or $ \Span V_{L}(t) \leq c(L)-2$.
\end{corollary}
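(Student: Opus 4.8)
The plan is to reduce everything to the basic span bound together with Corollary \ref{cor:AdeqProperties}, which already rules out the borderline case. First I would recall from the introduction that every link satisfies $\Span V_L(t) \le c(L)$, with equality precisely when $L$ is alternating. So if $\Span V_L(t) = c(L)$, we are immediately in the first alternative and there is nothing more to prove.

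The substance of the corollary is therefore to exclude the single intermediate value $\Span V_L(t) = c(L)-1$. Here I would invoke the fact, noted just after the definition of the span, that $4\Span V_L(t) = \Span \langle D \rangle$ and that the exponents of $A$ appearing in $\langle D \rangle$ are all congruent modulo $4$; consequently $\Span V_L(t)$ is always an integer. Hence the only possibilities compatible with $\Span V_L(t) \le c(L)$ are $\Span V_L(t) = c(L)$, $\Span V_L(t) = c(L)-1$, or $\Span V_L(t) \le c(L)-2$, and it suffices to rule out the middle case.

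To eliminate $\Span V_L(t) = c(L)-1$, I would argue by contradiction: suppose $L$ is quasi-alternating and $\Span V_L(t) = c(L)-1$. Corollary \ref{cor:AdeqProperties} asserts that any non-split link with $\Span V_L(t) = c(L)-1$ is not quasi-alternating (because Theorem \ref{thm:main} makes it a non-alternating adequate link, which has thick Khovanov homology by Khovanov, whereas quasi-alternating links are Khovanov-thin). This directly contradicts the hypothesis that $L$ is quasi-alternating. Therefore $\Span V_L(t) \ne c(L)-1$, and combining with the integrality observation and the bound $\Span V_L(t) \le c(L)$ gives exactly the dichotomy in the statement.

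I do not expect any real obstacle: the corollary is essentially the contrapositive of Corollary \ref{cor:AdeqProperties} packaged with the elementary span inequality, so the proof is a short deduction. The only point requiring a moment of care is the integrality of $\Span V_L(t)$, which is what turns the exclusion of the value $c(L)-1$ into the clean inequality $\Span V_L(t) \le c(L)-2$; this is immediate from the mod-$4$ congruence of the Kauffman bracket exponents recorded in the background section.
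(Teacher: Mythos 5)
Your proof is correct and follows essentially the same route as the paper, which also deduces the statement directly from Corollary \ref{cor:AdeqProperties} together with the Kauffman--Murasugi--Thistlethwaite fact that $\Span V_L(t) = c(L)$ if and only if $L$ is alternating. Your explicit justification of the integrality of $\Span V_L(t)$ via the mod-$4$ congruence of Kauffman bracket exponents is a detail the paper leaves implicit (and is genuinely needed for links with an even number of components, where individual exponents of $t$ are half-integers), but it does not change the argument.
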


\begin{proof}
The corollary follows immediately from Corollary \ref{cor:AdeqProperties} and the fact that  $c(L) = \Span V_{L}(t)$ if and only if $L$ is alternating.
\end{proof}

Many knots in the tables satisfy $\Span V_K(t) = c(K)-1$, and hence are adequate. No knots with crossing number at most nine satisfy the equation. Table \ref{tab:number} shows the number of prime knots with crossing number between $10$ and $16$ that satisfy $\Span V_K(t) = c(K)-1$. See \cite{knotinfo} for a complete list of adequate knots with at most $13$ crossings. 

\begin{table}

\begin{tabular}{| c | c |}
\hline
Crossing number & Number of knots\\
\hline
\hline
10 & 3\\
\hline
11 & 15\\
\hline
12 & 78\\
\hline
13 & 517\\
\hline
14 & 2668\\
\hline
15 & 15365\\
\hline
16 & 82505\\
\hline
\end{tabular}
\caption{The number of prime knots with a given crossing number satisfying $\Span V_K(t) = c(K) -1$.}
\label{tab:number}
\end{table}

\bibliographystyle{amsalpha}
\bibliography{QC}

\end{document}